\theoremstyle{plain}
\newtheorem{theorem}{Theorem}[section]
\newtheorem{lemma}[theorem]{Lemma}
\newtheorem{corollary}[theorem]{Corollary}
\newtheorem*{claim*}{Claim}
\newtheorem*{theorem*}{Theorem}
\newtheorem*{lemma*}{Lemma}
\newtheorem*{observation*}{Observation}
\newtheorem*{corollary*}{Corollary}
\theoremstyle{definition}
\newtheorem{definition}[theorem]{Definition}
\newtheorem{problem}[theorem]{Problem}
\newtheorem*{definition*}{Definition}
\newtheorem*{problem*}{Problem}
\newtheorem*{problems*}{Problems}
\newtheorem*{fact*}{Fact}
\theoremstyle{remark}
\newtheorem*{example*}{Example}
\newtheorem*{remark*}{Remark}
\newcommand{\EE}{\operatorname{\mathsf{E}}}
\newcommand{\diff}{\mathop{}\!\mathrm{d}}
\newcommand{\Uniform}{\mathrm{U}}
\newcommand{\Binomial}{\mathrm{Bin}}
\newcommand{\Pois}{\mathrm{Pois}}
\newcommand{\eps}{\varepsilon}
\newcommand{\eqdef}{\operatorname{\overset{def}{=}}}
\newcommand{\rk}{\operatorname{rk}}
\newcommand{\Mat}{\mathcal{M}}
\renewcommand{\Pr}{\mathsf{P}}
\renewcommand{\geq}{\geqslant}
\renewcommand{\leq}{\leqslant}
\renewcommand{\emptyset}{\varnothing}
\begin{document}
\setlength{\parskip}{0.2cm}
\begin{center}
    \Large
    On the random minimum edge-disjoint spanning trees problem\\
    \vspace*{1cm}
    \large
     Dmitry Shabanov\footnote{Moscow Institute of Physics and Technology, Laboratory of Combinatorial and Geometric Structures; HSE University, Faculty of Computer Science}, Nikita Zvonkov\footnote{HSE University, Faculty of Computer Science}\footnote{The work was supported by the HSE University Basic Research Program}\\
\end{center}

\section*{Abstract}

It is well known that finding extremal values and structures can be hard in weighted graphs. However, if the weights are random, this problem can become way easier. In this paper, we examine the minimal weight of a union of $k$ edge-disjoint trees in a complete graph with independent and identically distributed edge weights. The limit of this value (for a given distribution) is known for $k=1,2$. We extend these results and find the limit value for any $k>2$. We also prove a related result regarding the structure of sparse random graphs.

\section{Introduction}

Let us recall some definitions. A spanning tree in a graph is a set of edges that form a tree and covers all the vertices. For a weighted graph, by its weight denote the sum of its edge weights. The minimum spanning tree (MST) in a weighted graph (i.e. every edge has it's "weight" being some real non-negative value) is the spanning tree with the minimal sum of its edge weights.

The set of edges $T_1\cup\dots\cup T_k$ is said to be a union of $k$ edge-disjoint spanning trees in a graph if $T_i$ forms a spanning tree and $T_i\cap T_j=\varnothing$ for any $i\neq j$. In the paper we are interested in the minimal weight of a union of $k$ edge-disjoint spanning trees.

In the classic paper \cite{F1985} A. Frieze considered a so-called Random MST problem:
\begin{problem}\label{prob:mst}
    Consider a complete weighted graph on $n$ vertices. Let every edge have a random weight, distributed as $\Uniform[0,1]$ (uniformly on [0,1]). Let all the weights be a set of independent values. Then, what is the limit weight of the MST of this graph?
\end{problem}

He figured out that in this case the limit value is $\zeta(3)$, and, in general, for any distribution $X$ such that for $\xi\sim X$, $\lim\limits_{\eps\to0+}\Pr[\xi<\eps]/\eps=a$ and $\EE \xi^2<\infty$, if the edge weights are distributed as $X$, then the limit value for MST is $\displaystyle\frac{\zeta(3)}{a}$.

Frieze and Johansson in \cite{FRIEZE2017} solved the minimum $k$ edge--disjoint spanning trees problem for $k=2$ and found the asymptotic weight for large $k$, using $k$-cores of random graphs.

This work comprises two results: the first one improves on the efforts of Frieze and Johansson, solving the minimum $k$ edge--disjoint spanning trees problem for any $k>2$. The second result suggests a hierarchy of dense structures in a sparse random graph.

For any integer $k>2$, define $c'_k$ as the root of an equation on $c$, $c>0$:
\[c\times\frac{\sum\limits_{i\geq k-1}(2c)^i/i!}{\sum\limits_{i\geq k}(2c)^i/i!}=k.\]
\begin{theorem}\label{th:main}
    Consider a complete weighted graph $K_n$ on $n$ vertices with the weights being independent and identically distributed according to distribution $X$. Let $X\geq0$ a.s., $\Pr [X\leq \eps]\sim a\cdot\eps$ for $\eps\to0$. Then for any $k>1$, the minimal weight of a union of $k$ edge-disjoint spanning trees in $K_n$ converges in probability to
    \[\frac{1}{2a}\int\limits_{0}^{+\infty}x(1-\beta^2_k(x))\diff x,\]
    where $\beta_k(c)$ is 0 if $c<2c'_k$ and otherwise it is the largest root of the equation
        \[\beta=1-\Pr[\Pois(\beta c)\geq k],\]
    and $\Pois(\lambda)$ is the Poisson distribution with mean value $\lambda$.
\end{theorem}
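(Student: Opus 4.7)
The plan is to extend the matroid-union framework that Frieze used for $k=1$ and Frieze--Johansson used for $k=2$ to general $k$. The minimum weight of $k$ edge-disjoint spanning trees in $K_n$ equals the weight of a minimum-weight basis of $M_k$, the union of $k$ copies of the cycle matroid of $K_n$. By the greedy identity for minimum-weight bases of a matroid,
\[
W^*_k \;=\; \int_0^\infty \bigl[k(n-1) - \rk_{M_k}(G_t)\bigr]\,\diff t,
\]
where $G_t \subseteq K_n$ consists of the edges with weight at most $t$. Since $\Pr[X\leq\eps]\sim a\eps$, the substitution $t=c/(an)$ couples $G_t$ to the Erd\H{o}s--R\'enyi graph $G(n,c/n)$. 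A standard truncation---using that the integrand vanishes once $G_t$ is already sufficiently dense (in particular, once the minimum degree exceeds $k$, which happens well before $t$ leaves the window $c=O(1)$)---reduces the analysis to
\[
a\cdot W^*_k \;\sim\; \frac{1}{n}\int_0^\infty \bigl[k(n-1)-\rk_{M_k}(G(n,c/n))\bigr]\,\diff c.
\]

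The heart of the proof is to compute the asymptotic deficiency $D_k(c)\eqdef \lim_{n\to\infty}\tfrac{1}{n}\bigl[k(n-1)-\rk_{M_k}(G(n,c/n))\bigr]$. By Edmonds' matroid-union theorem this deficiency equals $\max_{F\subseteq E(G)}[k(c(V,F)-1)-|E(G)\setminus F|]$, and I would show that the optimal $F^{*}$ on $G(n,c/n)$ concentrates on the edges carried by a distinguished ``dense'' vertex subset $U\subseteq V$ obtained by iteratively peeling away vertices that cannot sustain $k$ matroid-union contributions. Via local weak convergence of $G(n,c/n)$ to the $\Pois(c)$ Galton--Watson tree, the asymptotic density of $U$ is the largest fixed point of the recursion from the theorem statement, namely $\beta_k(c)$; the threshold $c=2c'_k$ is precisely the value at which this recursion first admits a macroscopic positive fixed point, so $\beta_k(c)=0$ below and positive above. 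Carefully accounting for edges interior to $U$ (asymptotically $\beta_k^{2}(c)\cdot cn/2$) versus those crossing or outside $U$---the latter being automatically in the matroid basis and the former occupying the full matroid-union rank of the induced core---gives, after cancellation, $D_k(c)=\tfrac{c}{2}(1-\beta_k^{2}(c))$.

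Once the pointwise limit is in hand, Fatou's lemma combined with the exponential decay of $1-\beta_k^{2}(c)$ as $c\to\infty$ yields the integral representation for $\lim \EE[W^*_k]$. Convergence in probability follows from an Azuma--Hoeffding bound applied to an edge-exposure martingale, each edge's influence being $O(1/n)$, which is enough for an $o(1)$ fluctuation bound around the mean. The main obstacle will be the structural identification of $F^{*}$ with the peeling set $U$, particularly in a neighborhood of the phase transition $c=2c'_k$ where a macroscopic dense subgraph emerges discontinuously and where two very close-to-optimal maximizers must be distinguished; this is exactly where the paper's companion result on the hierarchy of dense structures in sparse random graphs enters the argument, providing the necessary control on the maximizer's location.
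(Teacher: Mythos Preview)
Your framework is the paper's, but two of your central claims are false. First, $D_k(c)\ne\tfrac{c}{2}(1-\beta_k^2(c))$: for $c<2c'_k$ every edge of $G(n,c/n)$ is independent in $M_k$, so the normalized deficiency is $k-c/2$, not $c/2$; above the threshold your own accounting (exterior edges $(1-\beta_k^2)cn/2$ plus core rank $k|U|$) gives $D_k(c)=k(1-|U|/n)-\tfrac{c}{2}(1-\beta_k^2(c))$. It happens that the two expressions have the same integral over $(0,\infty)$ after an integration by parts---this is in effect the paper's route, which computes $r_k'(c)=1-\beta_k^2(2c)$ (Lemma~\ref{lem:beta}) rather than $D_k$ itself---but your pointwise identity is wrong. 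Relatedly, if $U$ is the set left after peeling degree-$\leq k$ vertices, i.e.\ the $(k{+}1)$-core, then $|U|/n\to\Pr[\Pois(\beta_k c)\geq k{+}1]$, not $\beta_k(c)$: in the local tree a non-root survives with probability $\beta_k$, but the root survives only with the smaller probability $\pi_{k+1}(\beta_k c)$. The fixed-point recursion also acquires its positive root at the core-emergence threshold $\gamma_{k+1}=\inf_\lambda \lambda/\pi_k(\lambda)$, strictly below $2c'_k$; the value $2c'_k$ is where the core first reaches average degree $2k$, which is a different event.

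The more serious gap is the clause ``the former occupying the full matroid-union rank of the induced core.'' This is the paper's main technical contribution and is far from automatic: one needs $\rk_{M_k}C_{k+1}=k|V(C_{k+1})|(1+o(1))$ for all $c\geq c'_k$, and whether $C_{k+1}$ is even $k$-deeply connected is listed as an open problem in the paper's conclusion. The paper establishes the rank estimate (Lemma~\ref{lem:gao} at $c=c'_k$, then propagated to larger $c$ via Lemma~\ref{lem:core_rank}) by invoking the Gao--P\'erez-Gim\'enez--Sato theorem that at $c=c'_k$ no subgraph of $G(n,c'_kn)$ has average degree exceeding $2k+\eps$, and then tracking edge by edge how $k|V(C_{k+1})|-\rk C_{k+1}$ can grow. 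Local weak convergence tells you the size and edge count of the core but nothing about whether the core supports $k$ edge-disjoint spanning trees; without the Gao et al.\ input the computation of $D_k$ cannot be completed, and this is precisely the step that separates $k\geq 3$ from the previously known cases.
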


In order to formulate the next result we need to define what a random graph is. There are two main models, which are in some way equivalent to each other.

Fix $n>0$ and consider a random permutation of edges of a complete graph on $n$ vertices: $\displaystyle (e_i)_{i\in\binom{n}{2}}$. By $G_i$ denote $(V, \{e_1,\dots,e_i\})$. Then $G_1\subsetneq G_2\subsetneq\dots\subsetneq G_{\binom{n}{2}}$ is called a \textit{random graph process}. By $G(n,m)$ we denote the distribution of $G_m$ in this process, this model is called \textit{uniform}. There is also an other model of random graphs called \textit{binomial}, denoted by $G(n,p)$ --- that is, every pair of vertices is connected by an edge independently with probability of $p$. Note the following: if some monotone graph property (i.e. either monotonously increasing, a property which cannot be lost after adding an edge to the graph, or monotonously increasing) is satisfied w.h.p. (i.e. with probability tending to $1$ as $n\to\infty$) in $G(n,m=cn)$ for any $c>c_0$, it's also satisfied w.h.p. in $G(n,p=2c/n)$ for any $c>c_0$. The reverse is also true. The proof can be found in \cite{Bollobas2001}.

\begin{definition}
    By a $k$-deeply connected component define a maximal by inclusion induced subgraph of $G$, containing $k$ edge--disjoint trees.
\end{definition}

\begin{theorem}\label{th:structure}
    Consider some $k\geq2$. There exists such $c'_k$ (defined later) that:
    \begin{enumerate}
        \item for any $c\in(0,2c'_k)$, w.h.p. no induced subgraph of $G(n, c/n)$ of size at least 2 contains $k$ edge--disjoint trees.
        \item for any $c>2c'_k$, w.h.p. there is a single non-trivial $k$-deeply connected component in $G(n, c/n)$ of size $\beta_k(c)n(1+o(1))$.
    \end{enumerate}
\end{theorem}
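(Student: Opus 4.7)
The plan is to handle the two parts separately: Part (1) via the first moment method applied to induced subgraphs that could carry $k$ edge-disjoint spanning trees, and Part (2) via a peeling / branching-process analysis yielding the fixed-point equation for $\beta_k(c)$.

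For Part (1), for each $s \geq 2$ let $E_s$ denote the expected number of vertex subsets $S$ of size $s$ such that $G[S]$ contains $k$ edge-disjoint spanning trees in $G(n, c/n)$. By the Nash--Williams theorem such a subgraph must have at least $k(s-1)$ edges, so
\[
E_s \leq \binom{n}{s}\,\Pr\!\left[\Binomial\!\left(\binom{s}{2},\,c/n\right) \geq k(s-1)\right].
\]
For $s < 2k$ the inequality $\binom{s}{2} \geq k(s-1)$ fails, so $E_s = 0$. For $s = \alpha n$ with $\alpha \in (0,1]$, Stirling's formula together with a sharp Binomial tail estimate (including the $(1-c/n)^{\binom{s}{2}-k(s-1)}$ correction) gives
\[
\tfrac{1}{n}\log E_s \;\leq\; h(\alpha) + k\alpha \log\!\tfrac{\alpha c}{2k} + k\alpha - \tfrac{\alpha^2 c}{2} + o(1),
\]
where $h(\alpha) = -\alpha\log\alpha - (1-\alpha)\log(1-\alpha)$. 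Optimizing in $\alpha$, one verifies that the critical $c$ at which the supremum first becomes non-negative coincides exactly with $2c'_k$, by using the defining equation $c \cdot \sum_{i \geq k-1}(2c)^i/i! = k \sum_{i \geq k}(2c)^i/i!$. Hence $\sum_s E_s = o(1)$ for $c < 2c'_k$, with the intermediate range $2k \leq s = o(n)$ dispatched by a direct Chernoff bound; this gives Part (1).

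For Part (2), the asymptotic local structure of $G(n,c/n)$ around a typical vertex is a Galton--Watson tree with $\Pois(c)$ offspring distribution. Declaring a vertex ``safe from joining the component'' exactly when it has fewer than $k$ neighbors that do join the component leads by recursion to the self-consistent equation
\[
\beta = 1 - \Pr[\Pois(\beta c) \geq k],
\]
whose largest root is the claimed $\beta_k(c)$. To make this rigorous, I would implement an explicit peeling algorithm on the graph (iteratively removing vertices that cannot belong to any $k$-edge-disjoint-tree configuration of the current subgraph) and track its trajectory via the configuration model and Wormald's differential-equation method, in the style of the Pittel--Spencer--Wormald analysis of the $k$-core. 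The output is that w.h.p.\ the surviving vertex set has size $\beta_k(c)\,n(1+o(1))$, concentrated. A verification of the Nash--Williams partition inequality on this dense, nearly-regular surviving core shows that it indeed contains $k$ edge-disjoint spanning trees; the reverse containment — that any induced subgraph with $k$ edge-disjoint trees lies inside the surviving set — follows from the first-moment calculation of Part (1).

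The main obstacle is \emph{uniqueness}: a priori, two vertex-disjoint induced subgraphs could each carry $k$ edge-disjoint spanning trees, giving two distinct non-trivial $k$-deeply connected components. I would settle this by a sprinkling argument: first expose a $(1-\delta)$-fraction of the edges and locate the candidate large pieces from the peeling analysis; then expose the remaining $\delta$-fraction and show that any two linear-sized candidate pieces are w.h.p.\ joined by enough additional edges (possibly through short augmenting paths through the complement) that their combined induced subgraph on the joined vertex set still satisfies the Nash--Williams inequality, contradicting the maximality of both. This step is strictly harder than the analogous $k$-core uniqueness proof, because the $k$-edge-disjoint-tree property is more restrictive than the minimum-degree condition of $k$-cores, so standard expansion estimates for the merging paths must be strengthened; closing this gap is where I expect the chief technical difficulty to lie.
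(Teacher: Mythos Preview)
Your approach is genuinely different from the paper's, and the main gap is in Part~(2).

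The paper does not use a first-moment bound, a branching-process recursion, or sprinkling. Instead it runs the random graph process $G_1\subset\cdots\subset G_{Cn}$ and tracks the matroid rank $\rk_{\Mat^k}$. The key identity is that the probability a fresh uniform edge increases the rank is exactly the probability it joins two distinct $k$-deeply connected components, namely $1-\sum_j (A_j/n)^2+O(1/n)$ where the $A_j$ are the non-trivial component sizes (Lemma~\ref{lem:connection}). On the other hand, Corollary~\ref{cor:r_def} and Lemma~\ref{lem:beta} give $\rk G(n,cn)=r_k(c)n(1+o(1))$ with $r_k'(c)=1-\beta_k^2(2c)$. A short martingale argument then forces $\sum_j (A_j/n)^2\to\beta_k^2(2c)$. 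Uniqueness drops out immediately: since $\sum_j (A_j/n)^2$ is monotone in the process and $\beta_k^2$ is continuous on $(2c'_k,\infty)$, the appearance of a second linear-size component would produce a jump of size at least $\delta^2$ in $\sum (A_j/n)^2$, which is impossible. Part~(1) is the same argument with $\beta_k=0$.

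Your Part~(1) outline is plausible but the assertion that the first-moment rate function has its zero exactly at $c=2c'_k$ is doing all the work and is not verified; that statement is essentially Lemma~\ref{lem:gao_prep} (Gao--P\'erez-Gim\'enez--Sato), whose proof is not a bare first-moment computation.

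The more serious issue is your Part~(2). The rule ``$v$ is in the component iff $v$ has at least $k$ neighbours in the component'' characterises the $k$-core, not the $k$-deeply connected component; being $k$-deeply connected is a \emph{global} edge-disjoint-spanning-trees condition, and there is no local peeling move that removes exactly the vertices outside the $k$-deeply connected component. Your naive vertex-level fixed point $\beta'=\Pr[\Pois(\beta' c)\geq k]$ does happen to produce $\beta_k(c)=\lambda_{k+1}(c)/c$, but this is \emph{not} the $k$-core size $\pi_k(\lambda_k(c))$ that your verbal heuristic actually describes (the correct $k$-core recursion is two-level, on edges then vertices). So the heuristic and the equation do not match, and a PSW-style differential-equation analysis of an unspecified peeling algorithm cannot be carried out as stated. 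Likewise, Nash--Williams on the surviving set would require knowing its edge count to within $o(n)$, which your analysis does not supply. The paper sidesteps all of this: it never identifies the component by a local rule, but infers $\sum (A_j/n)^2$ directly from the rank increment, and gets uniqueness from continuity rather than sprinkling.
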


The remaining of the paper is structured as follows: at first we discuss so-called $k$-deep connectivity and its connection with the $k$-cores of graphs. Then, we obtain an algorithm that allows for the construction of the union of $k$ edge-disjoint trees with minimal weight in \textit{any} weighted graph and discuss its connection with the $k$-deeply connected components and the $k$-cores of a graph. Finally, we prove Theorems \ref{th:main} and \ref{th:structure} using the discussed structures and declare some of the possible further results.

\section{Structure of Sparse Random Graphs}

\subsection{$k$-deeply connected components}

Before we start discussing the algorithm for finding the union of $k$ edge-disjoint trees with minimal weight in the graph, we want to somewhat re-define connectivity. Traditionally, a graph is said to be connected if there is a path between any pair of vertices (or, equivalently, if some spanning tree can fit into it). That definition allows to split any graph into some \textit{connected components}, that is, the largest induced subgraphs which are connected.

Now, let us define a slightly different property.

\begin{definition}
    We say that a graph is \textit{$k$-deeply connected} if $k$ edge-disjoint spanning trees can fit into it.
\end{definition}

\begin{definition}
    A subset of vertices $C$ of a graph is said to form a \textit{$k$-deeply connected component} if:
    \begin{enumerate}
        \item its induced subgraph is $k$-deeply connected;
        \item for any $C'\supsetneq C$, $C'\ne C$, its induced subgraph is \textit{not} $k$-deeply connected.
    \end{enumerate}
\end{definition}

It's easy to see that 1-deep connectivity is the same as "traditional" connectivity. However, one of the natural properties of connectivity holds for any $k$.

\begin{lemma}
    For any $k$, any vertex of a graph can only belong to one $k$-deeply connected component, which means that any graph can be uniquely split into $k$-deeply connected components.
\end{lemma}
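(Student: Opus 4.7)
The plan is to reduce the lemma to a single structural claim, and then outline how one would verify that claim via Nash--Williams.

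For the reduction, suppose for contradiction that some vertex $v$ lies in two distinct $k$-deeply connected components $C_1 \neq C_2$. Since $C_1$ and $C_2$ are both maximal with respect to inducing a $k$-deeply connected subgraph, neither is contained in the other, so $C' := C_1 \cup C_2$ strictly contains both. If one could show that $G[C']$ is again $k$-deeply connected, then $C'$ would be a strictly larger $k$-deeply connected vertex set containing $C_1$, contradicting the maximality of $C_1$. Therefore the lemma reduces to the structural claim: whenever $V_1, V_2 \subseteq V(G)$ satisfy $V_1 \cap V_2 \neq \varnothing$ and both $G[V_1]$ and $G[V_2]$ contain $k$ edge-disjoint spanning trees, then so does $G[V_1 \cup V_2]$.

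To prove this claim, I would invoke the Nash--Williams/Tutte theorem: a graph $H$ contains $k$ edge-disjoint spanning trees if and only if, for every partition $\pi$ of $V(H)$ into $p$ non-empty parts, the number $e_H(\pi)$ of edges of $H$ crossing $\pi$ is at least $k(p-1)$. Fix an arbitrary partition $\pi$ of $V_1 \cup V_2$ into $p$ parts, and let $\pi_i := \pi|_{V_i}$ be the restriction of $\pi$ to $V_i$ (keeping only non-empty intersections), with $r_i$ parts. Applying Nash--Williams to $G[V_1]$ and $G[V_2]$ yields $e_{G[V_i]}(\pi_i) \geq k(r_i - 1)$ for $i = 1, 2$. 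Using the combinatorial identity $r_1 + r_2 = p + q$, where $q \geq 1$ is the number of parts of $\pi$ meeting both $V_1$ and $V_2$ (at least one, since any part containing a vertex of $V_1 \cap V_2$ qualifies), and performing inclusion--exclusion on $e_{G[V_1]}(\pi_1) + e_{G[V_2]}(\pi_2)$ to recover $e_{G[V_1 \cup V_2]}(\pi)$, one obtains a lower bound that is close to the required $k(p-1)$.

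The main obstacle is handling the double-counted term: edges of $G[V_1 \cap V_2]$ crossing $\pi$ contribute to both $e_{G[V_1]}(\pi_1)$ and $e_{G[V_2]}(\pi_2)$, but only once to $e_{G[V_1 \cup V_2]}(\pi)$. To control this, fix $k$ edge-disjoint spanning trees $S_1, \dots, S_k$ of $G[V_1]$ and $T_1, \dots, T_k$ of $G[V_2]$, and analyze the overlap set $O := (S_1 \cup \cdots \cup S_k) \cap (T_1 \cup \cdots \cup T_k)$. Since $O \subseteq E(G[V_1 \cap V_2])$ and is a disjoint union of the $k$ forests $S_i \cap E(G[V_1 \cap V_2])$, one has $|O| \leq k(|V_1 \cap V_2| - 1)$; combined with the slack terms arising in each of the two Nash--Williams inequalities (which are non-negative and grow precisely when the partition is badly aligned with the trees), this absorbs the overcount and yields $e_{G[V_1 \cup V_2]}(\pi) \geq k(p-1)$. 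Alternatively, this last step can be packaged cleanly via Edmonds' matroid-union theorem applied to the $k$-fold graphic matroid on $E(G[V_1 \cup V_2])$, which directly gives the rank equality $r = k(|V_1 \cup V_2| - 1)$ equivalent to the claim.
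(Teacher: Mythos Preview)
Your reduction to the structural claim --- that if $G[V_1]$ and $G[V_2]$ each admit $k$ edge-disjoint spanning trees and $V_1\cap V_2\neq\varnothing$, then so does $G[V_1\cup V_2]$ --- is exactly right, and it is the same reduction the paper makes. The divergence is in how the claim is verified.

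The paper does not use Nash--Williams at all; it gives a direct two-line construction. Taking spanning trees $T_1^1,\dots,T_k^1$ of $G[V_1]$ and $T_1^2,\dots,T_k^2$ of $G[V_2]$, it sets $T'_i:=T_i^2\setminus E(G[V_1])$ and observes that $T_i^1\cup T'_i$ is connected on $V_1\cup V_2$ (any $w\in V_2\setminus V_1$ reaches $V_1$ along its $T_i^2$-path to a vertex of $V_1\cap V_2$, and none of the edges on the initial segment of that path are removed). Moreover the sets $T_i^1\cup T'_i$ are pairwise edge-disjoint, since $T_i^1\cap T_j^1=\varnothing$, $T'_i\cap T'_j\subseteq T_i^2\cap T_j^2=\varnothing$, and $T_i^1\cap T'_j=\varnothing$ because every edge of $T'_j$ has an endpoint outside $V_1$. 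Extracting a spanning tree from each $T_i^1\cup T'_i$ finishes the proof.

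Your Nash--Williams route, by contrast, has a real gap at exactly the point you flag as the ``main obstacle.'' After writing $e_{G[V_1\cup V_2]}(\pi)\geq k(r_1-1)+k(r_2-1)-e_{G[V_1\cap V_2]}(\pi_{12})$ and using $r_1+r_2=p+q$, you need $e_{G[V_1\cap V_2]}(\pi_{12})\leq k(q-1)$ to conclude. Your proposed fix --- bounding $|O|\leq k(|V_1\cap V_2|-1)$ --- controls the wrong quantity: it bounds the \emph{total} number of overlapping tree-edges, not the number that cross $\pi_{12}$, and $|V_1\cap V_2|$ can be much larger than $q$. Concretely, take $k=1$, $V_1\cap V_2=\{b_1,b_2,c_1,c_2\}$ with $G[V_1\cap V_2]=K_{2,2}$, and $\pi_{12}=\{\{b_1,b_2\},\{c_1,c_2\}\}$: then $e_{G[V_1\cap V_2]}(\pi_{12})=4$ while $k(q-1)$ can be as small as $2$, so the inequality you need fails even though the overall Nash--Williams bound still holds (the slack in $e_{G[V_i]}(\pi_i)\geq k(r_i-1)$ happens to rescue it in this example). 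The appeal to ``slack terms'' is thus doing all the work and is not justified; nor does invoking Edmonds' matroid-union theorem help, since that is equivalent to the very partition inequality you are trying to establish. The cleanest way to close the gap is precisely the paper's trick: replace $T_i$ by $T_i\setminus E(G[V_1])$ so that the sets $S_i\cup T'_i$ become pairwise disjoint, at which point each contributes $\geq p-1$ crossing edges and you are done --- but at that moment you no longer need Nash--Williams.
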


\begin{proof}
    Consider a vertex $v$ and the set $\mathcal{C}_v$ of all subsets of $V$ containing $v$, such that their induced graphs are $k$-deeply connected. It is easy to see that $\{v\}\in\mathcal{C}_v$, which, coupled with the fact than the size of sets in $\mathcal{C}_v$ is bounded by $|V|$, proves that there is \textit{at least one} $k$-deeply connected component, containing $v$.

    Now assume that there are two different components $C^1,C^2$ on the subsets $V^1, V^2$, both containing $v$ (which means that $C^1\cap C^2\neq\emptyset$). That means that there are $k$ edge-disjoint trees $T_1^1,T_2^1,\dots,T_k^1$ on $V^1$ and similarly, there are $T_1^2,T_2^2,\dots,T_k^2$ on $V^2$.

    Now for any $T_i^2$, consider $T'_i$, obtained by deleting all the edges of $T_i^2$, belonging to $C^1$. In this case $T^1_i\cup T'_i$ is connected on $V^1\cup V^2$ hence should contain some subtree $T^3_i$. For any $i\neq j$, $T_i^3$ and $T_j^3$ do not have common edges by construction which means that the graph induced by $V_1\cup V_2$ is $k$-deeply connected and therefore neither of $C^1, C^2$ is a connectivity component.
\end{proof}

\subsection{The structure of a deeply connected graph}

For any graph, by its $\kappa$-core denote the largest induced subgraph of minimal degree at least $\kappa$. Note that $\kappa$-core may be empty for $\kappa>1$. Here and later, for any graph $G$, we will denote its $\kappa$-core by $C_\kappa=(V(C_\kappa), E(C_\kappa))$.

\begin{lemma}\label{lem:representation}
    For any $k\geq2$, any $k$-deeply connected graph $G$ has a non-empty $(k+1)$-core.
\end{lemma}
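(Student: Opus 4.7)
The plan is to prove the lemma by a direct edge-counting argument: combine the lower bound on $|E(G)|$ coming from the $k$ edge-disjoint spanning trees with the upper bound on $|E(G)|$ that would hold if the $(k+1)$-core were empty, and observe that these two bounds are incompatible for $k \geq 2$.

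First, I would record the lower bound. Since $G$ contains $k$ pairwise edge-disjoint spanning trees, and each spanning tree on $n = |V(G)|$ vertices has exactly $n-1$ edges, we immediately get $|E(G)| \geq k(n-1)$.

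Next, I would derive the upper bound by contradiction, assuming the $(k+1)$-core is empty. Emptiness of the $(k+1)$-core is equivalent to saying that the peeling procedure --- iteratively deleting a vertex of degree at most $k$ in the current graph --- removes every vertex of $G$. Let $v_1, v_2, \dots, v_n$ be the order of deletion. At the moment $v_i$ is removed, the current graph is exactly the induced subgraph $G[\{v_i, v_{i+1}, \dots, v_n\}]$, so $v_i$ has at most $k$ neighbors among $\{v_{i+1}, \dots, v_n\}$ in $G$ itself. Charging each edge to the endpoint with smaller index gives
\[|E(G)| \;\leq\; \sum_{i=1}^{n} \min(k,\, n-i) \;=\; k(n-k) + \binom{k}{2} \;=\; kn - \binom{k+1}{2}.\]

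Combining the two bounds yields $k(n-1) \leq kn - \binom{k+1}{2}$, i.e.\ $\binom{k+1}{2} \leq k$, which fails precisely for $k \geq 2$. This contradiction forces the $(k+1)$-core to be non-empty. There is essentially no obstacle here: the argument is entirely elementary, and the only point that warrants checking is that the peeling-based edge count assigns each edge to exactly one endpoint (the earlier one in the deletion order), so no double-counting occurs. It is also worth noting that the bound $k \geq 2$ is tight for this argument (and for the lemma itself), since a single spanning tree is $1$-deeply connected but has empty $2$-core.
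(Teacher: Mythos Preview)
Your proof is correct and arguably cleaner than the paper's. The paper argues structurally: in a $k$-deeply connected graph every vertex has degree at least $k$, and any vertex of degree exactly $k$ is a leaf in each of the $k$ spanning trees; deleting it preserves $k$-deep connectivity, so one peels repeatedly, and if the process empties the graph one looks at the two-vertex stage, which would require $k\geq 2$ parallel edges --- impossible in a simple graph. You bypass this structural invariant and replace it with a double count: the $k$ trees force $|E(G)|\geq k(n-1)$, while an empty $(k+1)$-core forces the degeneracy bound $|E(G)|\leq kn-\binom{k+1}{2}$, and the two clash for $k\geq 2$. Your route is more self-contained since you never need to verify that leaf removal preserves $k$-deep connectivity; on the other hand, the paper's peeling argument simultaneously sets up the ``normal representation'' used immediately afterwards.

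One minor remark: your closed form $\sum_{i=1}^{n}\min(k,n-i)=kn-\binom{k+1}{2}$ is literally valid only for $n\geq k$. This is harmless here, because for $2\leq n<2k$ the inequality $k(n-1)\leq\binom{n}{2}$ already fails, so no simple $k$-deeply connected graph on that many vertices exists and the hypothesis is vacuous; a one-line remark to that effect would make the write-up airtight. As in the paper's proof, the trivial one-vertex graph is implicitly excluded.
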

\begin{proof}
    Note that the degrees of the vertices in $G$ are at least $k$ (there are $k$ edge-disjoint spanning trees passing through every vertex). Consider any vertex $v$ with degree equal exactly to $k$. Then, for any of these $k$ trees, $v$ is a leaf, which means, that we can delete this vertex and the graph will remain $k$-deeply connected. Let's do in until there are no vertices of degree at most $k$. If the graph turned out to be empty, do back 2 steps. In that moment, the graph only consisted of 2 vertices and at least $k$ edges. Since $k\geq 2$ we get the contradiction.
\end{proof}

Moreover, any $k$-deeply connected graph can be represented in the following way:
\begin{itemize}
    \item The first layer consists of all the vertices of the $k+1$-core;
    \item At layer $t$, all the vertices have exactly $k$ edges, connecting them to the lower levels and at least one edge, connecting them to a vertex at layer $t-1$.
\end{itemize}
Let's call this a \textit{normal representation} of a $k$-deeply connected graph.

\section{Matroids}

\subsection{Quick survey}\label{sec:matroid_survey}

All of the facts in this section can be found in most of the books on matroid theory (in particular, in \cite{W76}). Shortly, matroid is a structure that allows for a greedy algorithm to work properly due to its properties:

\begin{definition}
    A \textit{matroid} is any structure $\Mat=(E,\mathcal{I})$, where $E$ is a finite set, known as the \textit{ground set}, $\mathcal{I}\subseteq 2^E$, the elements of $\mathcal{I}$ are called \textit{independent}\footnote{Note that this definition has nothing to do with independent sets of vertices in the graph; in this paper we don't operate with the independent sets of vertices at all} sets and, moreover, the following conditions are satisfied:
    \begin{enumerate}
        \item For any independent $A$ and any $B\subseteq A$, $B$ is also independent;
        \item For any independent $A,B$ such that $|A|>|B|$, there exists $a\in A\backslash B$ such that $B\cup\{a\}$ is also independent.
    \end{enumerate}
\end{definition}

Any maximal by inclusion independent set is called a \textit{basis} of matroid. Note that the second property (also known as matroid property) implies that all the bases are of the same size.

For any matroid $\Mat=(E,\mathcal{I})$, by its \textit{rank function} we denote such $\rk:2^E\to \mathbb{Z}_+$ that maps $X$ to the maximal size of an independent subset of $X$. This function has a number of properties, one of which is quite important: for any $\Mat$ on the ground set $E$ with rank function $\rk$ and any $A,B\subset E$,
\[\rk A + \rk B \geq \rk(A\cup B) + \rk(A\cap B).\tag*{(RANK)}\label{eq:rank}\]
In particular, $\rk(A\cup B)\leq\rk A + \rk B$ for any $\Mat,A,B$.

The rank function proves to be very useful in solving optimisation problems on matroids. Consider a matroid $\Mat$ with additional \textit{weight function} $W:E\to\mathbb{R}$. Consider also a quite natural generalisation: for any $A\subseteq E$, $W(A):=\sum\limits_{a\in A}W(a)$. Then, optimisation problem of finding the lightest basis of a matroid can be solved by a greedy algorithm:
\begin{enumerate}
    \item Sort the elements of $E$ in ascending order of weight:
    \[E=(e_1,\dots,e_{|E|}),W(e_1)\leq\dots\leq W(e_{|E|}).\]
    \item Choose the smallest $i$ such that $\rk_\Mat(\{e_i\})=1$ and let $a_1=i$; if no such $i$ exists, terminate.
    \item If $a_1,\dots,a_{k-1}$ are determined, then choose the smallest $i$ such that $\rk_\Mat(\{e_{a_1},\dots,e_{a_{k-1}},e_i\})=k$ and let $a_k=i$; if no such $i$ exists, terminate.
    \item Output the set of all $e_{a_i}$. It is the lightest basis.
\end{enumerate}

For any number of matroids on the same ground set $\{\Mat_i=(E,\mathcal{I}_i)|i\in[k]\footnote{here and later by $[k]$ we denote the set $\{1,2,\dots,k\}$}\},$ by the union of matroids denote
\[\bigcup\limits_{i\in[k]}\Mat_i=(E,\mathcal{I}); A\in \mathcal{I} \textnormal{ iff } A=\bigsqcup\limits_{i\in[k]}A_i, \textnormal{ such that } A_i\in\mathcal{I}_i.\]

This structure turns out to also be a matroid, moreover, its rank function can be calculated based on $\rk_{\Mat_i}$:

\begin{lemma}\label{lem:union}
    For any $\Mat = \bigcup\limits_{i\in[k]}\Mat_i$, the following holds:
\[\rk_{\Mat}(X)=\min\limits_{U\subseteq X}\left(|X\backslash U|+\sum\limits_{i\in[\ell]}\rk_{\Mat_i}(U)\right).\]
\end{lemma}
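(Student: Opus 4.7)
The plan is to prove the identity by establishing both inequalities. The direction $\rk_\Mat(X) \leq \min_U(\ldots)$ is straightforward: let $A \subseteq X$ be any $\Mat$-independent set with decomposition $A = \bigsqcup_{i \in [k]} A_i$, $A_i \in \mathcal{I}_i$. For any $U \subseteq X$ we split according to $U$:
\[|A| = |A \setminus U| + \sum_{i \in [k]} |A_i \cap U| \leq |X \setminus U| + \sum_{i \in [k]} \rk_{\Mat_i}(U),\]
since $A_i \cap U$ is independent in $\Mat_i$. Taking $|A| = \rk_\Mat(X)$ and minimizing over $U$ yields the upper bound.

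For the reverse direction, I would exhibit a specific $U^*$ for which equality holds. Fix a maximum independent set $A = \bigsqcup A_i$ of $\Mat$ in $X$. The key observation is that for every $x \in X \setminus A$ and every $i$, the set $A_i \cup \{x\}$ is $\Mat_i$-dependent; otherwise $A \cup \{x\}$ would be $\Mat$-independent, contradicting the maximality of $A$. Hence there is a unique $\Mat_i$-circuit $C_i(x) \subseteq A_i \cup \{x\}$. I then build an auxiliary digraph on $X$ by drawing an arc $x \to y$ whenever $y \in C_i(x) \setminus \{x\}$ for some $i$ (with analogous arcs handling elements of $A_j \setminus A_i$ that lie in some $\Mat_i$-circuit), and let $U^*$ be the set of vertices reachable from $X \setminus A$. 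Two properties then require verification: $X \setminus U^* \subseteq A$, which is immediate from $X \setminus A \subseteq U^*$, and $\rk_{\Mat_i}(U^*) = |A_i \cap U^*|$ for every $i$. Granting these, the desired equality follows by direct combination:
\[|X \setminus U^*| + \sum_i \rk_{\Mat_i}(U^*) = |A \setminus U^*| + \sum_i |A_i \cap U^*| = |A| = \rk_\Mat(X).\]

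The main obstacle is the second property, i.e.\ showing that $A_i \cap U^*$ spans $U^*$ in $\Mat_i$. Supposing some $z \in U^* \setminus A_i$ made $(A_i \cap U^*) \cup \{z\}$ $\Mat_i$-independent, I would trace $z$ back to some $x_0 \in X \setminus A$ along a shortest directed path in the auxiliary digraph and perform the corresponding exchanges between the parts $A_i$ in reverse order, eventually inserting $x_0$ to produce a $\Mat$-independent set of size $|A|+1$, contradicting maximality. The subtlety is that uniqueness of the fundamental circuits $C_i(\cdot)$ combined with the minimality of the exchange path is what guarantees each $A_i$ remains $\Mat_i$-independent after every intermediate swap; without the minimality it is easy to accidentally reintroduce a circuit. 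This is essentially the classical Edmonds--Nash-Williams matroid union identity, and the bookkeeping of the exchange chain is the principal technical step.
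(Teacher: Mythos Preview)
The paper does not prove this lemma at all: it is stated in the ``Quick survey'' subsection, which opens with the disclaimer that all facts there can be found in standard matroid references (citing Welsh's book), and no argument is given. So there is no proof in the paper to compare your attempt against; you are supplying what the authors deliberately omitted.

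As for your sketch itself: the easy direction is clean and complete. For the hard direction your strategy---build an exchange digraph on $X$, take $U^*$ to be the set reachable from $X\setminus A$, and argue that $A_i\cap U^*$ spans $U^*$ in each $\Mat_i$---is exactly the classical Edmonds argument, and the computation $|X\setminus U^*|+\sum_i\rk_{\Mat_i}(U^*)=|A|$ is correct once those two properties hold. The one place where your write-up is genuinely incomplete is the parenthetical ``analogous arcs handling elements of $A_j\setminus A_i$'': for the reachability/closure argument to go through you need arcs from \emph{every} $x\notin A_i$ with $A_i\cup\{x\}$ dependent (not only from $x\in X\setminus A$), since otherwise you cannot conclude that the fundamental $\Mat_i$-circuit of an element $z\in U^*\setminus A_i$ lies inside $U^*$. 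You clearly have this in mind, but it should be stated explicitly. The shortest-path exchange chain you describe for the contradiction is the right mechanism; the point you flag about minimality of the path guaranteeing that each intermediate swap keeps the parts independent is precisely the crux, and a full proof would spell out that after a swap along a shortest path, no earlier arc on the path is destroyed. With those two clarifications your argument is the textbook one.
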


By $K_n$ denote a complete graph on $n$ vertices. Note that \[\Mat=(E(K_n), \{E'\subset E|E'\textnormal{ doens't contain any cycles}\})\] is a matroid, known as \textit{a graphic matroid}. It is straightforward to see that $\rk E'= n - (\textnormal{\# of connectivity components in }E')$

\subsection{The rank of a graph}

Consider a matroid $\Mat^k=\bigcup\limits_{1\leq i\leq k} \Mat_i$, where all $\Mat_i$ are graphic matroids on the same ground set. So, any set of edges $E'$ is independent in $\Mat^k$ if and only if it can be split into $k$ forests. For a fixed $k$, by the rank of a graph $G$ we denote $\rk G:=\rk_{\Mat^k}(E(G))$, $\rk E := \rk_{\Mat^k} E$.

\begin{definition}
    For any matroid $\Mat=(E,\mathcal{I})$, $E'\subsetneq E$, $e\in E\backslash E'$ we say that $e$ is \textit{dependent} on $E'$ if $\rk_{\Mat}(E')=\rk_{\Mat}(E'\cup\{e\})$. Otherwise, it is said to be \textit{independent} of $E'$.
\end{definition}

We would also say that $e$ is dependent on $E$ when $\rk_{\Mat^k}(E)=\rk_{\Mat^k}(E\cup\{e\})$ without specifying the matroid.

\begin{lemma}\label{lem:independence}
    In a matroid, for any set $E'\subset E$, any maximal independent subset $I$ of $E'$ and any $e\not\in E'$, $e$ is independent of $E'$ if and only if it is independent of $I$.
\end{lemma}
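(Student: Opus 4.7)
The plan is to work directly from the matroid axioms, using the key observation that since $I$ is a maximal independent subset of $E'$, we have $|I| = \rk(E')$ (and of course $|I| = \rk(I)$, since $I$ is itself independent).

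For the forward direction (independent of $I$ implies independent of $E'$), I would argue that if $\rk(I \cup \{e\}) = |I|+1$, then $I \cup \{e\}$ is an independent set sitting inside $E' \cup \{e\}$, so $\rk(E' \cup \{e\}) \geq |I|+1 > |I| = \rk(E')$, hence $e$ is independent of $E'$. This direction is essentially immediate from monotonicity of the rank function.

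For the backward direction (independent of $E'$ implies independent of $I$), suppose $\rk(E' \cup \{e\}) = \rk(E')+1 = |I|+1$, and let $J$ be any maximal independent subset of $E' \cup \{e\}$, so that $|J| = |I|+1 > |I|$. By the second matroid axiom applied to the independent sets $I$ and $J$, there exists some $x \in J \setminus I$ such that $I \cup \{x\}$ is independent. Now the crucial step: this $x$ must be $e$. Indeed, if $x \in E'$, then $I \cup \{x\} \subseteq E'$ would be a larger independent subset of $E'$ than $I$, contradicting maximality of $I$. Therefore $x = e$ and $I \cup \{e\}$ is independent, so $\rk(I \cup \{e\}) = |I|+1 > \rk(I)$, meaning $e$ is independent of $I$.

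There is no real obstacle here; the proof is a clean application of the exchange axiom together with the maximality of $I$ inside $E'$. The only subtlety worth noting is that one need not literally extend $I$ itself to a basis of $E' \cup \{e\}$ — it is enough to compare $I$ with an \emph{arbitrary} maximal independent subset $J$ of $E' \cup \{e\}$ via the augmentation axiom and then observe that the augmenting element cannot lie in $E'$.
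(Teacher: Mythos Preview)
Your proof is correct and essentially identical to the paper's: both directions use exactly the same ideas --- monotonicity of rank for the easy direction, and for the other direction picking a maximal independent $J\subseteq E'\cup\{e\}$ (the paper calls it $I'$), applying the augmentation axiom to $I$ and $J$, and noting the augmenting element must be $e$ by maximality of $I$ in $E'$.
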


\begin{proof}
    By definition $e$ is independent of $E'$ if and only if $\rk E' = \rk (E'\cup\{e\}) - 1$. Consider any maximal independent subset $I'\subset E'\cup\{e\}$. Since $|I'|=|I| + 1$, for some $x\in I'\backslash I$, $\rk I\cup\{x\}=\rk I'$. If $x\neq e$, $I$ is not maximal independent set of $E'$, which means that $x$ must be $e$, i.e. $e$ is independent of $I$. On the other hand, if $e$ is independent of $I$, then $\rk E'\cup\{e\}\geq\rk I\cup\{e\}=\rk E' + 1$, which means that $e$ is independent of $E'$.
\end{proof}

\begin{lemma}\label{lem:criteria_for_independence}
    Any set of edges $E$ is independent in $\Mat^k$ if and only if it does not contain subgraphs with $m$ vertices and more than $k(m-1)$ edges.
\end{lemma}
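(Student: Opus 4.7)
The plan is to prove both directions using the characterization of graphic matroid rank together with Lemma \ref{lem:union}.

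For the forward (``only if'') direction, I would argue as follows. If $E$ is independent in $\Mat^k$, then by definition $E = F_1 \sqcup \dots \sqcup F_k$ where each $F_i$ is independent in the graphic matroid, i.e.\ a forest. Fix any subset $V' \subseteq V$ with $|V'| = m$. Each $F_i$ restricted to $V'$ is still a forest, so it has at most $m-1$ edges. Summing over $i$ shows that the subgraph induced by $V'$ has at most $k(m-1)$ edges. This part is essentially a one-line observation.

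For the harder backward direction, I plan to compute $\rk_{\Mat^k}(E)$ using Lemma \ref{lem:union}. Since all $\Mat_i$ are the same graphic matroid with rank function $\rk_{\mathrm{graphic}}$, we obtain
\[\rk_{\Mat^k}(E) = \min_{U \subseteq E}\left(|E \setminus U| + k \cdot \rk_{\mathrm{graphic}}(U)\right).\]
Now $E$ is independent in $\Mat^k$ iff $\rk_{\Mat^k}(E) = |E|$, which after rearranging is equivalent to the inequality $|U| \leq k \cdot \rk_{\mathrm{graphic}}(U)$ holding for every $U \subseteq E$. So it suffices to verify this edge-count inequality for arbitrary subsets of edges $U$, assuming the hypothesis on subgraphs.

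To do this, given any $U \subseteq E$, I would let $U_1, \dots, U_c$ be the edge sets of the connected components of the graph $(V(U), U)$, where $V(U)$ denotes the vertices incident to at least one edge of $U$, and let $m_i = |V(U_i)|$. By the formula for graphic rank, $\rk_{\mathrm{graphic}}(U) = \sum_{i=1}^c (m_i - 1)$. Applying the hypothesis to each vertex set $V(U_i)$ (of size $m_i$) gives $|U_i| \leq k(m_i - 1)$, and summing over $i$ yields $|U| \leq k \sum_{i=1}^c (m_i - 1) = k \cdot \rk_{\mathrm{graphic}}(U)$, as required. The main subtlety is just being careful about what ``subgraph on $m$ vertices'' means (i.e.\ the edge set induced by a chosen vertex subset) and that isolated vertices outside $V(U)$ do not affect either side of the inequality; once this is in place the argument is essentially routine.
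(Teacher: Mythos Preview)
Your proof is correct and follows essentially the same approach as the paper: both directions match, and for the backward direction both you and the paper invoke Lemma~\ref{lem:union} and then bound $|U|$ componentwise against $k\cdot\rk_{\mathrm{graphic}}(U)$ using the hypothesis on each connected piece. Your presentation is in fact slightly more direct, since you work immediately with the connected components of $U$ rather than first ``saturating'' $E'$ to an $E''$ and rephrasing the minimum as one over vertex partitions, but the underlying argument is the same.
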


\begin{proof}
    Note that if there exists an induced subgraph on $m$ vertices containing at least $k(m-1)+1$ edges, then $E$ is not independent: let's split $E$ into $k$ forests. At least one of them contains not less than $m$ edges on $m$ vertices.

    On the other hand, $\rk_{\Mat^k}(E)=\min\limits_{E'\subset E}\Big(|E\backslash E'| + k\cdot\big(|V|-(\textnormal{\# of connectivity components in }E')\big)\Big)$. For every such $E'$, let us add to it all the edges that do not decrease the number of $k$-deeply connected components. Let $E''$ denote the resulting set. Now,
    \[\rk_{\Mat^k}(E)=\min\limits_{E'\subset E}\Big(|E\backslash E''(E')| + k\cdot\big(|V|-(\textnormal{\# of connectivity components in }E''(E'))\big)\Big).\]

    But this is equal to
    \[\rk_{\Mat^k}(E)=\min\limits_{\begin{smallmatrix}V_1\sqcup\dots\sqcup V_\ell = V\\ V_i\textnormal{ are connected in }E\end{smallmatrix}}\big((\textnormal{\# of edges between different }V_i) + k\cdot(|V(A)|-\ell)\big).\]
    For any $V_1\sqcup\dots\sqcup V_\ell = V(E)$, they contain at most $\sum_{i\in[\ell]}k(|V_i|-1)=k|V(E)|-k\ell=k(|V(E)|-\ell)$ edges, which means that
    \[(\textnormal{\# of edges between different }V_i) + k\cdot(|V(E)|-\ell)\geq|E|,\]
    but since $\rk_{\Mat^k}(E)\leq |E|$, we have $\rk_{\Mat^k}(E)=|E|$.
\end{proof}

\begin{lemma}\label{lem:connection}
    Consider $G=(V,E)$ and an edge $e\not\in E$. Then $e$ is dependent of $E$ if and only if the vertices of $e$ fall into the same $k$-deeply connected component of $G$.
\end{lemma}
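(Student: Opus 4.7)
The plan is to prove both directions of this biconditional using the previous lemmas, with Lemma \ref{lem:criteria_for_independence} doing most of the work.

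For the direction ($\Leftarrow$), I would suppose the endpoints $u,v$ of $e$ lie in a common $k$-deeply connected component $C$. By definition $C$ admits $k$ edge-disjoint spanning trees, so $E(C)$ contains an independent set of size $k(|C|-1)$ and hence $\rk E(C) = k(|C|-1)$; on the other hand, Lemma \ref{lem:criteria_for_independence} forces $\rk(E(C) \cup \{e\}) \leq k(|C|-1)$, since all edges still lie on $|C|$ vertices. Thus $e$ is dependent on $E(C)$. To promote this to dependence on $E$, I would apply the submodular inequality \ref{eq:rank} to the sets $E(C) \cup \{e\}$ and $E$, whose intersection is $E(C)$ (using $e \notin E$) and whose union is $E \cup \{e\}$; this yields $\rk(E \cup \{e\}) \leq \rk E$, and combined with monotonicity this is an equality.

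For the direction ($\Rightarrow$), I would take a maximum independent subset $I$ of $E$ and apply Lemma \ref{lem:independence} to reduce to the fact that $e$ is dependent on $I$, so that $I \cup \{e\}$ is not independent in $\Mat^k$. Lemma \ref{lem:criteria_for_independence} then yields a vertex set $W$ on which $(I \cup \{e\})[W]$ has strictly more than $k(|W|-1)$ edges. The independence of $I$ rules this out for $I[W]$ alone, so necessarily $u,v \in W$ and $|I[W]| \geq k(|W|-1)$; together with the upper bound $|I[W]| \leq k(|W|-1)$ from Lemma \ref{lem:criteria_for_independence} applied to $I$, equality must hold. Since $I[W]$ is an independent set of the maximum possible size on $W$, any decomposition of it into $k$ forests is forced to give each forest exactly $|W|-1$ edges, i.e., a spanning tree of $W$. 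Hence $G[W]$ is $k$-deeply connected, and $u,v$ share a $k$-deeply connected component.

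The main delicate step I expect is the second direction, where one must extract not merely a dense subgraph but an actual $k$-deeply connected subgraph from the failure of independence. The key move is squeezing $|I[W]|$ between the lower bound inherited from $I \cup \{e\}$ failing Lemma \ref{lem:criteria_for_independence} and the upper bound from independence of $I$, forcing an exact decomposition of $I[W]$ into $k$ edge-disjoint spanning trees of $W$ rather than merely an edge-count estimate.
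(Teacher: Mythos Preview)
Your proof is correct. The forward direction ($\Rightarrow$) matches the paper's argument essentially step for step: reduce to a maximal independent $I$ via Lemma~\ref{lem:independence}, apply Lemma~\ref{lem:criteria_for_independence} to $I\cup\{e\}$ to find a vertex set $W$ carrying too many edges, then squeeze $|I[W]|$ to exactly $k(|W|-1)$ and decompose into $k$ spanning trees. You are in fact more explicit than the paper here, which simply asserts that ``exactly $k(m-1)$ edges'' implies $k$-deep connectivity without spelling out the pigeonhole over the $k$ forests.

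Your backward direction ($\Leftarrow$) differs from the paper. The paper builds a particular maximal independent set $I\subseteq E$ by ordering the edges so that each $E_i=E(V_i)$ comes in a block, checks that $|I\cap E_i|=k(|V_i|-1)$, and then observes that adding $e$ inside some $V_i$ creates an over-dense subgraph in $I$, so $e$ is dependent on $I$ and hence on $E$. You bypass this construction entirely: you show $e$ is dependent on $E(C)$ directly (rank saturated at $k(|C|-1)$), and then promote the dependence from $E(C)$ to $E$ via submodularity \ref{eq:rank} applied to $E(C)\cup\{e\}$ and $E$. This is shorter and avoids any choice of basis; the paper's construction, on the other hand, yields the extra structural fact that one can always choose a maximal independent set whose restriction to each $V_i$ is itself $k$-deeply connected, which it uses implicitly later (e.g.\ in the proof of Lemma~\ref{lem:gao}).
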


\begin{proof}
    Let $V_1,V_2,\dots, V_\ell$ be the sets of vertices of $k$-deeply connected components of $G$. Let $(V_i,E_i)$ be the corresponding induced subgraphs and let $m_i=|E_i|,n_i=|V_i|$. Note that the rank of $E_i$ is equal to $k(n_i-1)$. Consider the following order on $E$:
    \[\begin{array}{c}
        \{e_1,e_2,\dots,e_{m_1}\}=E_1,\\
        \{e_{m_1+1},e_{m_1+2},\dots,e_{m_1+m_2}\}=E_2,\\
        \dots,\\
        \{e_{m_1+\dots+m_{\ell-1}+1},e_{m_1+\dots+m_{\ell-1}+2},\dots,e_{m_1+\dots+m_{\ell}}\}=E_\ell,\\
        \{e_{m_1+\dots+m_{\ell}+1},e_{m_1+\dots+m_{\ell}+2},\dots,e_{|E|}\}=E\backslash\bigcup\limits_{i\in[\ell]}E_i.
    \end{array}
    \]
    Consider a set $I=\{e_i|e_i\textnormal{ is independent of }\bigcup\limits_{j<i}\{e_j\}\}$. Then $I$ is the maximal independent set of $E$, moreover, since $\rk E_1\cup\dots\cup E_r=\sum\limits_{i\in[r]}k(n_i-1)$, $|I\cap E_i|=k(n_i-1)$ for any $i\in[\ell]$. That means that for any $V_i$, its induced subgraph in $(V,I)$ is also $k$-deeply connected. Also note that any $V'$ induces a $k$-deeply connected subgraph in $I$ if and only if this subgraph contains exactly $k(|V'|-1)$ edges.

    Now, consider $e\not\in E$. If $e$ is dependent of $E$ then Lemma \ref{lem:independence} implies that $I\cup\{e\}$ is not independent. Therefore by Lemma \ref{lem:criteria_for_independence}, there is a subgraph in $I\cup\{e\}$ containing $m$ vertices and more than $k(m-1)$ edges. But that means that before adding $e$ this subgraph had exactly $k(m-1)$ edges, i.e. it was $k$-deeply connected.
\end{proof}

Let us understand the significance of this lemma. As it was noted in Subsection \ref{sec:matroid_survey}, there is an algorithm that allows for finding the minimum-weight basis of any weighted matroid. What the lemma states is that the condition on adding $e_i$ to the minimum basis can be replaced with the following:
\[e_i \textnormal{ connects different $k$-deeply connected components in } (V,\{e_{a_1},\dots,e_{a_{k-1}}\}),\]

which is by Lemma \ref{lem:independence} equivalent to connecting different $k$-deeply connected components in
$(V,\{e_1,\dots,e_{i-1}\})$. This connection states the similarity between the famous Kruskal algorithm (which is described in \cite{kruskal1956shortest}) and this one. Basically, one is derived from the other by re-defining what connectivity is.

\subsection{The $k$-core of a random graph}

B. Pittel, J. Spencer and N. Wormald examined \cite{Pittel1996} the structure of the $k$-core in a random graph and acquired the following result.

\begin{definition}
    Let $\Pois(\lambda)$ denote a Poisson distribution with the expected value of $\lambda$.
\end{definition}

\begin{definition}
    By $\pi_k(c)$ denote $\Pr[\Pois(c)\geq k]$. By $f_k(c)$ denote $\sum\limits_{n=k}^{+\infty}c^n/n!=\pi_k(c)\cdot e^c.$
\end{definition}

\begin{theorem}[Pittel et al.]\label{lem:k_core_density}
    Let $\gamma_k$ denote $\gamma_k=\inf\left\{\lambda/\pi_{k-1}(\lambda)\right\}.$ Let $\lambda_k(c)$ denote the larger root of the equation $c=\lambda/\pi_{k-1}(\lambda)$ for $c>\gamma_k$.

    For fixed $c>0$, consider a random graph $G(n, c/n)$. If $c>\gamma_k$, then w.h.p. the size of the $k$-core of $G(n, c/n)$ is equal to $\pi_{k}(\lambda_k(c))n(1+o(1))$. If $c<\gamma_k$, then w.h.p. the $k$-core of $G(n, c/n)$ is empty.
\end{theorem}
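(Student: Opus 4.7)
This is the Pittel--Spencer--Wormald theorem from \cite{Pittel1996}, so any proof plan is really a sketch of the original argument. The natural route is the \emph{peeling algorithm}: repeatedly delete a vertex of degree less than $k$ from $G(n,c/n)$ until none remains. What survives is by definition the $k$-core, and using the equivalence between $G(n,m)$ and $G(n,p)$ stated earlier, it suffices to work in the binomial model throughout.

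The heuristic that pins down the constants is a branching-process calculation. A typical vertex has asymptotically $\Pois(c)$ neighbors, and exploration along any tree-like neighborhood produces the same offspring law. Let $\beta$ denote the probability that a random edge has its far endpoint survive peeling; since an endpoint survives iff at least $k-1$ of its \emph{other} incident edges survive, the fixed-point equation
\[\beta = \pi_{k-1}(\beta c)\]
must hold. Setting $\lambda = \beta c$ converts this to $c = \lambda/\pi_{k-1}(\lambda)$, so the threshold must be $\gamma_k = \inf_{\lambda>0}\lambda/\pi_{k-1}(\lambda)$, and the fraction of surviving vertices is
\[\Pr[\Pois(\lambda)\geq k] = \pi_k(\lambda),\]
which is exactly the prediction in the statement.

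To rigorize this I would invoke Wormald's differential equation method. Parametrize peeling by a continuous time equal to the fraction of half-edges removed so far, and take as state variable the rescaled degree sequence of the surviving subgraph. Conditional on that sequence, the surviving subgraph is uniform over graphs with the given degrees, so the expected one-step change of each coordinate is a smooth function of the state. Wormald's theorem then yields uniform $o(1)$ concentration of the trajectory around the solution of an explicit ODE system whose absorbing state matches the branching-process equation above; translating back gives the claimed core size $\pi_k(\lambda_k(c))\,n(1+o(1))$ for $c>\gamma_k$ and emptiness for $c<\gamma_k$.

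The main obstacle --- and what makes the original proof delicate --- is that near $c=\gamma_k$ the derivative of $\lambda/\pi_{k-1}(\lambda)$ vanishes at the relevant root, so the ODE becomes singular and Wormald's Lipschitz hypothesis degenerates precisely near the absorbing state. Handling this requires either a separate subcritical argument (ruling out a macroscopic $k$-core for $c<\gamma_k$ via first-moment computations on candidate dense subgraphs) or a refined endgame analysis of the last $o(n)$ peeling steps; both are carried out in \cite{Pittel1996}.
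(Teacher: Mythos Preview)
The paper does not prove this theorem at all: it is stated as a cited result from \cite{Pittel1996} and used as a black box, with no proof or proof sketch provided. Your proposal is therefore not comparable to anything in the paper; you are (as you yourself note) sketching the original Pittel--Spencer--Wormald argument, and your outline of the peeling process, the branching-process fixed point $\beta=\pi_{k-1}(\beta c)$, and the differential-equation method is a faithful high-level summary of that work.
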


For any graph $G=(E,V)$, by its \textit{density} we denote the average degree of a vertex in $V$. It is easy to see that the density of a graph is equal to $2|E|/|V|$.

\begin{theorem}[Pittel et al.]
    Fix $c>0$, let $\xi\sim\Pois(c)$. Let $E_k(c)$ denote $\EE[\xi|\xi\geq k]$. Then for $c>\gamma_k$,  w.h.p. the density of $k$-core of $G(n, c/n)$ is equal to $E_k(c)(1+o(1))$
\end{theorem}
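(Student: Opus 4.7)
The plan is to combine the preceding theorem on the size of the $k$-core (which already gives $|V(C_k)|=\pi_k(\lambda_k(c))n(1+o(1))$ w.h.p.) with an analysis of the edge count $|E(C_k)|$, since the density is simply $2|E(C_k)|/|V(C_k)|$.

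My approach is the standard \emph{peeling} analysis via Wormald's differential equation method. Starting from $G(n,c/n)$, repeatedly delete a vertex of current degree strictly less than $k$ together with its incident edges, until none remain; what is left is precisely $C_k$. I would track jointly the number of surviving vertices $V(t)$, the number of surviving edges $E(t)$, and the empirical degree-count vector $(D_j(t))_{j\geq 0}$ throughout this process. The initial degree distribution in $G(n,c/n)$ is close to $\Pois(c)$, and by a configuration-model switching argument the surviving subgraph, conditional on its degree sequence, is close to uniform on simple graphs with that degree sequence. This enables writing down the expected one-step drift of $(V,E,D_j)$ in closed form.

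The structural claim at the heart of the Pittel--Spencer--Wormald argument is that the surviving empirical degree distribution remains close to a Poisson distribution conditioned on $\{\geq k\}$ throughout the peeling, with effective parameter decreasing continuously from $c$ down to $\lambda_k(c)$ at termination. Feeding this into the limit ODE and reading off its value at the stopping boundary gives $|V(C_k)|/n \to \pi_k(\lambda_k(c))$ (recovering the previous theorem) and $2|E(C_k)|/n \to \lambda_k(c)\pi_{k-1}(\lambda_k(c))$, so that the density is
\[\frac{2|E(C_k)|}{|V(C_k)|}\;\longrightarrow\;\frac{\lambda_k(c)\pi_{k-1}(\lambda_k(c))}{\pi_k(\lambda_k(c))}\;=\;\EE\bigl[\xi\,\big|\,\xi\geq k\bigr],\quad\xi\sim\Pois(\lambda_k(c)),\]
which under the substitution $c=\lambda_k(c)/\pi_{k-1}(\lambda_k(c))$ matches the expression $E_k(c)$ written in the statement.

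The main obstacle, as usual with the DE method applied to peeling, is the concentration step: one must verify Lipschitz regularity and small one-step variance of the drift, and, more delicately, show that the configuration-model approximation persists through the linearly many peeling steps until the stopping boundary is reached. Once that concentration is in place, evaluating the limit ODE at its fixed point is a short calculation that yields the displayed formula.
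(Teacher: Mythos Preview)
The paper does not supply a proof of this theorem at all: it is quoted verbatim as a result of Pittel, Spencer and Wormald \cite{Pittel1996} and then used as a black box. So there is no ``paper's own proof'' to compare against; your sketch is essentially the original PSW peeling/differential-equation argument, and in that sense it matches the cited source rather than anything the present paper does.

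One concrete issue with your write-up, however, is the last step. Your computation correctly gives the limiting density as
\[
\frac{\lambda_k(c)\,\pi_{k-1}(\lambda_k(c))}{\pi_k(\lambda_k(c))}
\;=\;\EE\bigl[\xi\mid\xi\geq k\bigr],\qquad \xi\sim\Pois(\lambda_k(c)),
\]
i.e.\ $E_k(\lambda_k(c))$ in the paper's notation. You then assert that ``under the substitution $c=\lambda_k(c)/\pi_{k-1}(\lambda_k(c))$'' this equals $E_k(c)$. It does not: that relation only tells you $c\,\pi_{k-1}(\lambda_k(c))=\lambda_k(c)$, which simplifies the numerator but leaves $\pi_k(\lambda_k(c))$ in the denominator, not $\pi_k(c)$. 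In other words your argument proves the density is $E_k(\lambda_k(c))$, which is the genuine PSW conclusion; the discrepancy with the displayed $E_k(c)$ is a slip in the paper's statement of the theorem, not a gap in your reasoning. You should flag this rather than paper over it with a substitution that does not actually work.
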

Note that $E_k(c)= \frac{cf_{k-1}(c)}{f_k(c)}$. 

One very important way to think about the rank of a graph was suggested by A. Frieze et al. in \cite{FRIEZE2017}.

\begin{theorem}[Frieze et al.]\label{lem:rk}
    Consider a graph $G$. Then for any $k\geq2$,
    \[\rk_{\Mat^k} G=|E(G)|-|E(C_{k+1}(G))| + \rk_{\Mat^k} C_{k+1}(G),\]
    moreover, consider any largest independent subset of $E(G)$ in $\Mat^k$, denote it by $I$. Then for any edge $e\in E(G)$, if it is not in the $C_{k+1}(G)$, then $e\in I$.
\end{theorem}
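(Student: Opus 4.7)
The plan is to build a largest independent set of $\Mat^k$ as a disjoint union of two pieces: the set $E_{\mathrm{out}} := E(G)\setminus E(C_{k+1}(G))$ of all edges lying outside the $(k+1)$-core, and a largest independent subset of the core edges $E(C_{k+1}(G))$. Two elementary observations drive the argument: (i) $E_{\mathrm{out}}$ contains no edge between two core vertices (since $C_{k+1}(G)$ is an induced subgraph), and (ii) the $(k+1)$-core is produced by iteratively deleting vertices of degree at most $k$, which supplies a natural \emph{peeling order} $v_1,\dots,v_N$ of the non-core vertices, with $v_i$ having at most $k$ neighbors in $G-\{v_1,\dots,v_{i-1}\}$.

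I would first prove that $E_{\mathrm{out}}$ is itself independent in $\Mat^k$ by constructing an explicit decomposition into $k$ forests $F_1,\dots,F_k$. Walking through $v_1,\dots,v_N$ in peeling order and, at step $i$, distributing the at-most-$k$ ``new'' incident edges of $v_i$ (edges of $E_{\mathrm{out}}$ from $v_i$ to a later-peeled or core vertex) injectively across $F_1,\dots,F_k$, every edge of $E_{\mathrm{out}}$ is placed exactly once. A hypothetical cycle in $F_j$ must touch at least one peeled vertex (by observation (i)); the peeled vertex of smallest index on that cycle would have received two of its incident edges during the same step, violating the one-per-forest rule. Hence $\rk E_{\mathrm{out}} = |E_{\mathrm{out}}|$.

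The upper bound $\rk G \leq |E_{\mathrm{out}}| + \rk C_{k+1}(G)$ then follows by matroid subadditivity \ref{eq:rank}. For the lower bound, I would pick a largest independent subset $I^*$ of $E(C_{k+1}(G))$ together with a forest decomposition $F_1^*\sqcup\dots\sqcup F_k^*$ of $I^*$, and glue: $\bigsqcup_{j}(F_j\cup F_j^*)$ should again be a $k$-forest decomposition of $E_{\mathrm{out}}\cup I^*$. This gluing check is the one non-routine step. A cycle in $F_j \cup F_j^*$ using only core vertices would be a cycle of the forest $F_j^*$, impossible; a cycle using some peeled vertex reproduces the same minimum-index argument as in the previous paragraph because $F_j^*$ contributes no edge incident to a peeled vertex. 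Consequently $\rk G = |E_{\mathrm{out}}| + \rk C_{k+1}(G)$.

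The ``moreover'' clause is then purely arithmetic. For any largest independent set $I$ of $E(G)$, the split $I = (I\cap E_{\mathrm{out}})\sqcup(I\cap E(C_{k+1}(G)))$ satisfies $|I\cap E_{\mathrm{out}}|\leq |E_{\mathrm{out}}|$ and $|I\cap E(C_{k+1}(G))|\leq \rk C_{k+1}(G)$, with the sum equal to $|I| = \rk G = |E_{\mathrm{out}}| + \rk C_{k+1}(G)$; both inequalities must therefore be equalities, giving $E_{\mathrm{out}}\subseteq I$. The main obstacle I anticipate is the gluing step above, but the absence of core-to-core edges in $E_{\mathrm{out}}$, combined with the peeling order, makes the cycle check work cleanly; everything else reduces to matroid subadditivity or bookkeeping.
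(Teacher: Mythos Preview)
Your proof is correct. The paper itself does not prove this statement: it is quoted as a result of Frieze et al.\ from \cite{FRIEZE2017} and used as a black box, so there is no in-paper argument to compare against.

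Your approach is a clean self-contained one. The peeling-order construction of the forests $F_1,\dots,F_k$ is exactly the right idea; the ``smallest-index vertex on the cycle'' argument correctly rules out cycles both in each $F_j$ and in each glued $F_j\cup F_j^*$, using precisely the fact you isolate, that $E_{\mathrm{out}}$ contains no edge between two core vertices. An alternative route for the independence of $E_{\mathrm{out}}$ (and for the gluing step) would have been to invoke the paper's Lemma~\ref{lem:criteria_for_independence} directly: orienting every edge of $E_{\mathrm{out}}$ toward its later-peeled (or core) endpoint gives each non-core vertex out-degree at most $k$ and each core vertex out-degree $0$, so any $m$-vertex subgraph carries at most $k(m-1)$ edges. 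Your explicit decomposition is just as good and arguably more transparent.

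The ``moreover'' clause is handled correctly: once the rank formula is established, the counting argument you give forces both inequalities to be equalities, hence $E_{\mathrm{out}}\subseteq I$.
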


\subsection{The rank of a random graph}

\begin{lemma}\label{lem:small_dense}
    For any $\eps, c>0$ there exists $\delta(\eps, c)>0$, such that w.h.p. there are no any subgraphs with density at least $2+\eps$ containing at most $\delta n$ vertices in $G(n,\lceil cn\rceil)$\footnote{Later we will drop the $\lceil~~\rceil$ brackets. Nevertheless, we should bear in mind that the number of edges in a graph is always integer.}.
\end{lemma}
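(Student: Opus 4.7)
The plan is a first-moment argument combined with Markov's inequality. For each $m$ with $3\leq m\leq\delta n$, let $X_m$ denote the number of vertex subsets $S\subseteq V$ of size $m$ that span at least $t_m:=\lceil(1+\eps/2)m\rceil$ edges in $G(n,\lceil cn\rceil)$ (having density at least $2+\eps$ on $m$ vertices is equivalent to inducing at least $t_m$ edges, the densest such subgraph being vertex-induced). For a fixed such $S$, a uniform bound on the factors of the hypergeometric probability gives $\Pr[S\text{ spans at least }t_m\text{ edges}]\leq\binom{\binom{m}{2}}{t_m}(C/n)^{t_m}$ for a constant $C=C(c)$ (since the probability a given edge is present is $2c/(n-1)$ and $t_m=O(n)$), so by the union bound
\[\EE X_m\leq\binom{n}{m}\binom{\binom{m}{2}}{t_m}\left(\frac{C}{n}\right)^{t_m}.\]
Applying $\binom{N}{k}\leq(eN/k)^k$ to both binomials and simplifying (the factors $n/m$ and $m/n$ almost cancel, leaving the small surplus $t_m-m=\eps m/2$ as the exponent on the base $m/n$), I expect to obtain an inequality of the form $\EE X_m\leq[\alpha\cdot(m/n)^{\eps/2}]^m$ for some constant $\alpha=\alpha(\eps,c)>0$.

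I would then fix $\delta$ small enough that $\alpha\delta^{\eps/2}\leq1/2$ and split $\sum_{m=3}^{\delta n}\EE X_m$ into two ranges. For $3\leq m\leq\log n$ I use the sharper estimate $(m/n)^{\eps/2}\leq(\log n/n)^{\eps/2}$; each summand is then at most $\alpha^3(\log n/n)^{3\eps/2}$, and summing over at most $\log n$ values of $m$ contributes $O\bigl((\log n)^{1+3\eps/2}/n^{3\eps/2}\bigr)=o(1)$. For $\log n<m\leq\delta n$ the crude bound $(m/n)^{\eps/2}\leq\delta^{\eps/2}$ gives a geometric tail with ratio at most $1/2$ starting at $m=\log n+1$, totalling $O(2^{-\log n})=o(1)$. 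Combining the two ranges, $\EE\sum_m X_m=o(1)$, and Markov's inequality yields that w.h.p.\ no bad subgraph exists.

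The main obstacle will be pushing the expected total to $o(1)$ rather than merely to a constant: naively summing the geometric bound $(\alpha\delta^{\eps/2})^m$ from $m=3$ only produces a small constant depending on $\delta$, so the argument must exploit the extra decay $(m/n)^{\eps/2}$ in the small-$m$ regime where $m/n\ll\delta$. Everything else --- the combinatorial estimates, the explicit choice of $\delta$ making $\alpha\delta^{\eps/2}\leq1/2$, and, if one prefers to work in $G(n,2c/n)$, the standard monotone-property transfer cited in the introduction --- is routine.
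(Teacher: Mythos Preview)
Your proposal is correct and follows essentially the same first-moment approach as the paper: bound $\EE X_m$ by $\binom{n}{m}\binom{\binom{m}{2}}{t_m}p^{t_m}$, extract the key factor $(m/n)^{\Theta(\eps)}$, and sum. The only cosmetic differences are that the paper works in $G(n,p)$ and transfers by monotonicity (whereas you handle $G(n,M)$ directly via the hypergeometric bound), and the paper controls the sum by a ratio test showing $\mathcal P'_m/\mathcal P'_{m-1}\le (m/n)^{\eps}h(\eps,c)$ after noting $\mathcal P'_{\lceil 2/\eps\rceil}=O(n^{-2})$, while you split the range at $m=\log n$; both devices address exactly the obstacle you flagged, and your version is if anything a bit cleaner.
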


Lemma \ref{lem:small_dense} is proven in the appendix.

\begin{corollary}
    For any $k>2$ and $c>0$, w.h.p. all the $k$-deeply connected components in the $G(n, cn)$ are either single vertices or of linear size.
\end{corollary}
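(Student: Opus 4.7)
My plan is to derive the corollary directly from Lemma~\ref{lem:small_dense} together with the trivial density lower bound forced by $k$-deep connectivity. Any $k$-deeply connected subgraph on $m\geq 2$ vertices must contain at least $k(m-1)$ edges (one per edge-disjoint spanning tree, each with $m-1$ edges), and therefore has density at least $2k(m-1)/m = 2k - 2k/m \geq k$. For $k > 2$ this gives a lower bound of at least $3$, strictly above $2$.

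I would then fix any $\eps\in(0,1)$, say $\eps=1/2$, and invoke Lemma~\ref{lem:small_dense} to obtain $\delta = \delta(\eps, c) > 0$ such that w.h.p.\ $G(n, cn)$ contains no subgraph of density at least $5/2$ on at most $\delta n$ vertices. Restricted to this high-probability event, every non-trivial $k$-deeply connected component has density $\geq 3 > 5/2$ and hence must span more than $\delta n$ vertices. Consequently every $k$-deeply connected component is either a single vertex or of size $\delta n = \Omega(n)$, which is exactly the claim.

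I do not anticipate any real obstacle: the proof amounts to an elementary algebraic check that $2k(m-1)/m\geq k$ for $m\geq 2$ together with a black-box application of Lemma~\ref{lem:small_dense}. The hypothesis $k > 2$ enters precisely so that the structural density is uniformly bounded away from $2$, producing a fixed $\eps>0$ independent of $m$ that can be fed into Lemma~\ref{lem:small_dense}.
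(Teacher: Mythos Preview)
Your proposal is correct and follows essentially the same approach as the paper: lower-bound the density of any non-trivial $k$-deeply connected component strictly above $2$, then invoke Lemma~\ref{lem:small_dense}. Your edge-counting justification of the density bound is in fact cleaner than the paper's, which appeals to $K_4$ as ``the smallest possible $k$-deeply connected component'' even though for $k>2$ the smallest such graph is $K_{2k}$, not $K_4$; the conclusion (density at least $3$) is nonetheless the same.
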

\begin{proof}
    Every non-trivial $k$-deeply connected component has density at least 3 (that is the density of $K_4$, the smallest possible $k$-deeply connected component). Lemma \ref{lem:small_dense} implies that we can estimate the size of any $k$-deeply connected component from below by $\delta(1, c)n$.
\end{proof}

\begin{corollary}
    For any $k>2$ and $c>0$, w.h.p. the number of non-trivial components is $O_{c}(1)$.
\end{corollary}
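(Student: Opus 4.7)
The plan is to derive this immediately from the previous corollary together with the partition property established earlier in the paper. Recall that the first lemma of Section 2 shows that the $k$-deeply connected components form a partition of the vertex set $V$ of $G(n, cn)$. So the family of non-trivial components consists of pairwise vertex-disjoint subsets of $V$.

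Next, I would invoke the preceding corollary, which states that w.h.p.\ every non-trivial $k$-deeply connected component of $G(n,cn)$ has at least $\delta(1,c)\,n$ vertices, where $\delta(1,c)>0$ is the constant supplied by Lemma \ref{lem:small_dense}. Combining this with the disjointness above, on the same w.h.p.\ event the number of non-trivial components is bounded by
\[
\frac{n}{\delta(1,c)\,n}=\frac{1}{\delta(1,c)},
\]
which depends only on $c$ (and the fixed $k$). Hence the number of non-trivial components is $O_c(1)$ w.h.p.

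There is no real obstacle here: the work has already been done in Lemma \ref{lem:small_dense} and in the preceding corollary. The only point worth being careful about is that the high-probability event on which the size bound holds is exactly the same event on which the counting bound $1/\delta(1,c)$ is deduced, so no additional union bound is needed.
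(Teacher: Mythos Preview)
Your proposal is correct and matches the paper's own proof essentially verbatim: the paper simply says the bound $1/\delta(1,c)$ follows from the previous corollary (non-trivial components have size at least $\delta(1,c)n$) together with their disjointness. You have merely made the use of the partition property explicit, which the paper leaves implicit.
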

\begin{proof}
    It follows from the previous two statements that w.h.p. this number is bounded by $1/\delta(1,c)$.
\end{proof}

As we remember from Theorem \ref{lem:k_core_density}, the density of a $k$-core is concentrated around some explicitly defined constant depending on $c$.

\begin{definition}
    Let $c'_k$ denote the moment when the limit density of the $k+1$-core in $G(n,c'_kn)$ equals $2k$.
\end{definition}

The following result was achieved by Gao, P\'erez--Gim\'enez and Sato in \cite{Gao2017}:
\begin{lemma}[Gao et al.]\label{lem:gao_prep}
    Fix $k\geq2$. Then, for any $\eps>0$, w.h.p. there is no subgraphs of average density at least $2k+\eps$ in $G(n,c'_kn)$.
\end{lemma}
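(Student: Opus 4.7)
The plan is to reduce the lemma to a statement about the $(k+1)$-core of $G(n,c'_k n)$ and then bound the expected number of overly dense subgraphs living inside it. Suppose $H$ is an induced subgraph on $m$ vertices with at least $(k+\eps/2)m$ edges (any subgraph of density $\geq 2k+\eps$ is dominated by such an induced subgraph on its vertex set). I will iteratively delete vertices of degree at most $k$ inside the current subgraph; each deletion removes at most $k$ edges, so if the peeling ended at the empty graph we would have discarded at most $km<(k+\eps/2)m$ edges, a contradiction. Hence the process terminates at a nonempty induced subgraph $H'$ of minimum degree $\geq k+1$, and the same count gives $2|E(H')|/|V(H')|\geq 2k+\eps$. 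Therefore $H'\subseteq C_{k+1}(G)$, so it suffices to prove that $C_{k+1}(G)$ itself has no induced subgraph of density $\geq 2k+\eps$.

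By Lemma~\ref{lem:small_dense}, w.h.p.\ no induced subgraph on fewer than $\delta n$ vertices has density $\geq 2+\eps'$ for any fixed $\eps'>0$; since $2k\geq 4$ for $k\geq 2$, this lets me assume $|V(H')|\geq\delta n$. By Theorem~\ref{lem:k_core_density}, the $(k+1)$-core has $n'=\pi_{k+1}(\lambda)n(1+o(1))$ vertices with $\lambda=\lambda_{k+1}(2c'_k)$, and by the very definition of $c'_k$ its density equals $E_{k+1}(2c'_k)(1+o(1))=2k+o(1)$. So the core sits exactly at density $2k$, and the remaining task is to forbid a macroscopic subgraph of strictly larger density.

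For the last step I would carry out a first-moment computation inside the core. Conditional on its vertex set and degree sequence, which is concentrated around a Poisson law truncated at $k+1$ with parameter $\lambda$, the $(k+1)$-core is distributed as a configuration model, and the chance that two prescribed core vertices are adjacent is asymptotically $2k/n'$. For each ratio $\alpha\in[\delta',1]$ the expected number of vertex subsets of the core of size $\alpha n'$ whose induced subgraph has at least $(k+\eps/2)\alpha n'$ edges is at most
\[
\binom{n'}{\alpha n'}\,\Pr\bigl[\Binomial(\tbinom{\alpha n'}{2},\,\tfrac{2k}{n'}(1+o(1)))\geq(k+\tfrac{\eps}{2})\alpha n'\bigr],
\]
and a Chernoff estimate combined with a union bound over a sufficiently fine net of $\alpha$-values drives the total expectation to $o(1)$. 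The main obstacle is justifying the configuration-model approximation uniformly in $\alpha$: the conditional law of the core is not literally the configuration model, and one must control the discrepancy either via the switching method or via a careful analysis of the peeling algorithm that builds $C_{k+1}$, which is where Gao, P\'erez--Gim\'enez and Sato invest most of their technical effort.
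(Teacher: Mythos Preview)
The paper does not prove Lemma~\ref{lem:gao_prep}; it is quoted verbatim as a result of Gao, P\'erez--Gim\'enez and Sato \cite{Gao2017}, so there is no in-paper argument to compare against. Your reduction of the problem to subgraphs of the $(k+1)$-core via peeling is correct and standard, and the appeal to Lemma~\ref{lem:small_dense} to rule out sublinear-size offenders is exactly right.

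The first-moment step, however, does not work as written, even at the heuristic level. If one pretends the core is $G(n',2k/n')$, then a subset of size $\alpha n'$ has expected induced edge count $\alpha^2 k n'$, and you are asking for at least $(k+\eps/2)\alpha n'$ edges, i.e.\ a multiplicative excess of roughly $(1-\alpha)/\alpha$ over the mean. For $\alpha=0.9$, say, this is an $11\%$ excess, so Chernoff gives at best $\exp(-c\cdot 0.01\cdot kn')$, while the number of subsets is $\binom{n'}{0.1n'}\approx\exp(0.33\,n')$; for $k=2$ the union bound is swamped by many orders of magnitude. The same failure occurs across the whole range $\alpha\in(\delta',1-\delta')$. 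So the binomial model you propose is not merely an approximation to be justified---it is numerically inadequate even if taken at face value.

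What actually makes the Gao--P\'erez-Gim\'enez--Sato argument go through is precisely the structure you set aside: the degree sequence of the $(k+1)$-core is (asymptotically) a Poisson truncated at $k+1$ with parameter $\lambda_{k+1}(2c'_k)$, and at $c=c'_k$ this law is tuned so that the core sits exactly at the edge/vertex ratio $k$. Their analysis tracks how the half-edges of a candidate subset are matched in the configuration model and exploits this degree profile, rather than replacing it by independent Bernoulli edges. You are right that this is where the technical effort lies, but the gap is more fundamental than a discrepancy between models: a genuinely different counting scheme is required, not just a justification of the binomial surrogate.
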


From this lemma we can deduce the important result regarding the rank of the graph.
\begin{lemma}\label{lem:gao}
    Fix $k\geq2$. Then, for any $\eps>0$ with $G\sim G(n,c'_kn)$ and $C_{k+1}:=C_{k+1}(G)$ the following holds w.h.p.:
    \[\frac{k|V(C_{k+1})|-\rk C_{k+1}}{|V(C_{k+1})|}<\eps.\]
\end{lemma}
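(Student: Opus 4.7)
The plan is to lower-bound $\rk C_{k+1}$ via the min--max formula developed in the proof of Lemma~\ref{lem:criteria_for_independence} and then control the extremal partition using the density bounds from Lemmas~\ref{lem:small_dense} and~\ref{lem:gao_prep}. Set $N:=|V(C_{k+1})|$ and $M:=|E(C_{k+1})|$. By the very definition of $c'_k$ together with Pittel et al.'s density theorem, $M=kN(1+o(1))$ and $N=\Theta(n)$ w.h.p.; in particular $M\geq(k-\eps/4)N$. The rank formula rewrites as
\[\rk_{\Mat^k} C_{k+1}=M+kN-\max_{V_1\sqcup\cdots\sqcup V_\ell=V(C_{k+1})}\sum_{i=1}^\ell(m_i+k),\]
where each $V_i$ induces a connected subgraph, $n_i=|V_i|$, and $m_i$ is the number of edges inside $V_i$. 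It thus suffices to show that w.h.p.\ $\sum_i(m_i+k)\leq kN+\eps N/2$ for every admissible partition.

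I plan to establish the per-part inequality $m_i+k\leq kn_i$ except on a bounded number of very large parts. For $n_i=1$ this is trivial. For $2\leq n_i\leq 2k$ it follows from the elementary identity $\binom{n_i}{2}-k(n_i-1)=(n_i-1)(n_i-2k)/2\leq 0$, since $m_i\leq\binom{n_i}{2}$. For $2k<n_i\leq\delta n$, with $\delta=\delta(1,c'_k)$ supplied by Lemma~\ref{lem:small_dense}, I will use that w.h.p.\ $m_i<3n_i/2$, which for $k\geq 2$ is bounded by $k(n_i-1)$. The only exceptional parts are the large ones with $n_i>\delta n$, of which there are at most $1/\delta$; for these Lemma~\ref{lem:gao_prep} applied with parameter $\eps/2$ gives $m_i\leq(k+\eps/4)n_i$, and consequently $m_i+k\leq kn_i+\eps n_i/4+k$. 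Summing the four cases, $\sum_i(m_i+k)\leq kN+\eps N/4+k/\delta\leq kN+\eps N/2$ for $n$ large enough, since $k/\delta=O(1)=o(N)$. Substituting back, $\rk_{\Mat^k} C_{k+1}\geq M-\eps N/2\geq(k-3\eps/4)N$, which is the desired bound.

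The main obstacle is the intermediate regime $2k<n_i\leq\delta n$: the threshold $2k+\eps$ from Lemma~\ref{lem:gao_prep} is only infinitesimally stronger than the core's own density, so naively summing the $\eps n_i/4+k$ excess over \emph{all} parts of this size would fail because up to a linear number of such parts could in principle occur. Ruling that scenario out is exactly what Lemma~\ref{lem:small_dense} is designed for: it forces every sublinear-size subgraph to be nearly tree-like, well below the $k$-deep threshold, so the only parts that can carry a genuine $\eps$-excess are the $O(1)$ parts of truly linear size, whose total overhead is absorbed into the error.
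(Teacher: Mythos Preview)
Your proof is correct and takes a somewhat different route from the paper's. The paper argues on the level of the whole graph $G$: it enumerates the nontrivial $k$-deeply connected components $A_1,\dots,A_r$ of $G$ (there are at most $1/\delta$ of them by Lemma~\ref{lem:small_dense}), builds a maximal independent set $E'$ containing $k$ spanning trees of each $A_i$, bounds the deficit $|E(G)|-\rk G=|E\setminus E'|$ by the density excess $\sum_i(k+\eps|A_i|/2)$ coming from Lemma~\ref{lem:gao_prep}, and only then transfers the estimate to the core via the identity $\rk C_{k+1}=\rk G-|E(G)|+|E(C_{k+1})|$ of Theorem~\ref{lem:rk}. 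You instead work directly inside $C_{k+1}$ through the Nash--Williams partition formula extracted from the proof of Lemma~\ref{lem:criteria_for_independence}, replacing the global $k$-deeply connected components by a size-based case split on the parts of the optimal partition. Both arguments rest on exactly the same two density inputs (Lemmas~\ref{lem:small_dense} and~\ref{lem:gao_prep}); the difference is that your route is more self-contained---it needs neither the $k$-deep-component decomposition nor Theorem~\ref{lem:rk}---while the paper's version keeps those components in the foreground, which is thematically aligned with the rest of the manuscript. One minor point worth making explicit: the partition formula in the proof of Lemma~\ref{lem:criteria_for_independence} is over all of $V(K_n)$, so strictly speaking the $n-N$ vertices outside the core are forced singleton parts and their total contribution $k(n-N)$ cancels, which is what reduces the formula to your displayed version with $N$ in place of $n$.
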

\begin{proof}
    Let $|V(C_{k+1})|=\alpha n(1+o(1))$ for some $0<\alpha <1$, $\alpha=\alpha(k)$.
    By Lemma \ref{lem:gao_prep}, the average density of any subgraph of $G$ is at most $2k+\eps$. Consider $A_1,\dots,A_r\subset V(G)$ --- the set of nontrivial (i.e. containing at least 2 vertices) $k$-deeply connected components of $G$. Lemma \ref{lem:small_dense} implies that $|A_i|\geq\delta n$ for any $i\in[r]$ and some $\delta=\delta(\eps,c'_k)$. Since $\sum|A_i|\leq n$ we have $r\leq 1/\delta$. For any  $A_i$, consider $T^{(i)}_1\cup\dots\cup T^{(i)}_k$ --- the corresponding non-intersecting spanning trees. Consider the largest independent (in terms of $\Mat^{(k)}$) subset $E'\subset E(G)$, containing all the trees $T^{(i)}_j$ that we have just mentioned. Then $E'$ also contains all the edges connecting different components, whereas any edge of $A_i$, not present in any of $T^{(i)}_j$, cannot be contained in $E'$ (otherwise $E'\cap A_i$ would consist of at least $k(m-1)+1$ edges on $m$ vertices).

    Further, $\rk G = |E'|$ and all the edges in $E\backslash E'$ lie in some of the $A_i$-s. Since all the $A_i$-s have density of at most $2k+\eps$, each of them contains at most $k+\eps|A_i|/2$ additional edges, and therefore
    \[|E\backslash E'| \leq \sum\limits_{i=1}^r (k+\eps|A_i|/2)\leq kr + \eps n/2\leq k/\delta(\eps,c'_k)+\eps n/2.\]

    By Theorem \ref{lem:rk},
    \begin{multline*}
        \rk C_{k+1}=
        \rk G-|E|+|E(C_{k+1})|=
        |E'|-|E|+|E(C_{k+1})|>\\
        >|E(C_{k+1})|-\eps n/2-O(1)>
        k|V(C_{k+1})|-\eps n,
    \end{multline*}
    consequently,
    \[\frac{k|V(C_{k+1})|-\rk C_{k+1}}{|V(C_{k+1})|}<\frac{k|V(C_{k+1})|-k|V(C_{k+1})|+\eps n}{n\alpha/2}<2\eps/\alpha.\]
    But since $2/\alpha$ is a constant for given $k$, by repeating the same argument with $\eps'=\alpha\eps/2$ we get the accurate estimate.
\end{proof}

\begin{lemma}\label{lem:core_rank}
    Consider a graph $G$ with non-empty $(k+1)$-core, denoted by $C_{k+1}$. Draw an additional edge $e$ in $G$. Assume that after drawing the edge, the $k+1$-core increased by $V_1$ vertices and $\rk C_{k+1}$ increased by $\Delta\rk C_{k+1}$. Let the subgraph of $G\cup\{e\}$ induced by these $V_1$ vertices contain $E_1$ edges, and let $E'$ be the number of edges between these vertices and the vertices of the old $(k+1)$-core. Then, if $kV_1-\Delta\rk C_{k+1}>0$, then
    \[E_1-V_1\geq kV_1-\Delta\rk C_{k+1}.\]
\end{lemma}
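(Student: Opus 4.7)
The plan is to prove the equivalent inequality $\Delta\rk C_{k+1} \geq (k+1)V_1 - E_1$, which, combined with the hypothesis $kV_1 - \Delta\rk C_{k+1} > 0$, yields the stated conclusion $E_1 - V_1 \geq kV_1 - \Delta\rk C_{k+1}$. The first ingredient is the degree bound: since $C_{k+1}(G\cup\{e\})$ has minimum degree at least $k+1$ by definition of the $(k+1)$-core, each of the $V_1$ new vertices contributes $\geq k+1$ to the degree sum inside the new core, and counting $E_1$-edges twice and $E'$-edges once yields
\[2E_1+E' \geq (k+1)V_1. \quad (\ast)\]

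The second ingredient is the partition formula from the proof of Lemma~\ref{lem:criteria_for_independence}: the partition of the vertex set into its $k$-deeply connected components attains the minimum in the rank formula, since by Nash--Williams--Tutte each such part is $k$-deeply connected (so any split costs at least $k$ crossings), while distinct components have fewer than $k$ crossings between them (so merging never helps). Let $p_d, p_d'$ count the $k$-deeply connected components of $C_{k+1}$ and of $C_{k+1}(G\cup\{e\})$, and $X_0, X'$ the edges between distinct components of each. Then $\rk C_{k+1} = k(n_0-p_d)+X_0$ and $\rk C_{k+1}(G\cup\{e\}) = k(n_0+V_1-p_d')+X'$, and the target inequality rearranges to
\[E_1 - V_1 \geq k(p_d'-p_d) + (X_0-X'). \quad (\dagger)\]
I would prove $(\dagger)$ by tracking how the $k$-deeply connected components evolve as the $V_1$ new vertices and the edges $E_1\cup E'$ are revealed on top of $C_{k+1}$: initially there are $p_d+V_1$ components (the old $p_d$ plus $V_1$ singletons), and finally there are $p_d'$. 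By Nash--Williams--Tutte, two $k$-deeply connected components can only merge once at least $k$ edges accumulate between them, so each unit decrement in the component count is paid for by $\geq k$ edges of $E_1\cup E'$, while the remaining such edges contribute either to the within-component structure of a merged component or to $X'$; matching this accounting against the degree surplus $(\ast)$ yields exactly $(\dagger)$.

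The main obstacle is the asymmetric bookkeeping of $E_1$- and $E'$-edges: the former have both endpoints in $V_1$ whereas the latter have only one, so a single $E_1$-edge can "save" two units of component count whereas an $E'$-edge saves only one. A convenient way to execute the accounting is to contract every graph-component of $C_{k+1}$ to a single vertex, reducing $\Delta\rk C_{k+1}$ to $\rk_{\Mat^k}\tilde G$ on the resulting multigraph $\tilde G$ on $V_1+p$ vertices (in which every $V_1$-vertex still has degree $\geq k+1$). Applying Lemma~\ref{lem:union} to $\tilde G$ reduces the problem to showing that $|U|+k\cdot c(U) \leq 2E_1+E'+kp-V_1$ for every $U\subseteq E(\tilde G)$, where $c(U)$ denotes the number of components of $(V(\tilde G),U)$; the case requiring most care is when the subgraph induced on $V_1$ is denser than $k(V_1-1)$, but the degree surplus from $(\ast)$ is exactly what is needed to absorb that excess.
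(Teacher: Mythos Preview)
Your degree bound $(\ast)$ is the same as the paper's, and your rank formula via $k$-deeply connected components is correct (it is implicit in the proof of Lemma~\ref{lem:connection}). But there is a genuine gap after that.

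The inequality you announce as your target, $\Delta\rk C_{k+1}\ge (k+1)V_1-E_1$, is literally a rearrangement of the conclusion, so ``combined with the hypothesis'' is not an extra step --- the hypothesis has to be used \emph{inside} the argument, and nowhere in your sketch do you use it. And in fact the target is false without it. Take $k=2$, let $G$ be $K_5$ together with one extra vertex $v$ joined to two vertices of $K_5$, and let $e$ join $v$ to a third vertex of $K_5$. Here $G$ is already $2$-deeply connected, so $e$ is dependent on $E(G)$; after adding $e$ we get $V_1=1$, $E_1=0$, $E'=3$, and $\Delta\rk C_{k+1}=2<3=(k+1)V_1-E_1$. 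Your component-merger accounting does not distinguish this situation from one where the hypothesis holds (you still see one merger paid for by $\ge k$ edges, with one edge left over), so it cannot possibly prove $(\dagger)$ in general. The contraction paragraph has the same defect, and in addition the claim that $\Delta\rk C_{k+1}$ equals $\rk_{\Mat^k}\tilde G$ after contracting components of $C_{k+1}$ is not a standard identity for matroid unions and would itself need proof.

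What is missing is exactly the dichotomy the paper uses. One first asks whether $e$ is independent of $E(G)$ in $\Mat^k$. If it is dependent, Lemma~\ref{lem:connection} puts both endpoints of $e$ in a single $k$-deeply connected component of $G$, and via the normal representation one checks that every new core vertex brings with it exactly $k$ edges lying in a maximum independent set, so $kV_1=\Delta\rk C_{k+1}$ and the hypothesis fails. If $e$ is independent, then all of the edges in $E'\cup E_1$ lay outside the old $(k+1)$-core, hence by Theorem~\ref{lem:rk} they (together with $e$) sit in a maximum independent set of $E(G\cup\{e\})$; restricting to $E(C')$ gives $\Delta\rk C_{k+1}\ge E'+E_1$ in one line, and $(\ast)$ finishes. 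This use of Theorem~\ref{lem:rk} is the step your proposal is missing.
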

The proof of Lemma \ref{lem:core_rank} is given in the appendix.

\begin{lemma}
    For any $\eps>0$, $k\geq2$ and $c>c'_k$ by $C_{k+1}$ denote the $(k+1)$-core in $G\sim G(n,cn)$. Then w.h.p. 
    \[\frac{k|V(C_{k+1})|-\rk C_{k+1}}{|V(C_{k+1})|}<\eps.\]
\end{lemma}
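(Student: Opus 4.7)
The plan is to extend Lemma~\ref{lem:gao} from the boundary value $c = c'_k$ to arbitrary $c > c'_k$ via a monotone coupling along the random graph process. We sample $G_0 \sim G(n, c'_k n)$ and then add $\ell := (c - c'_k)n$ further uniformly random edges one at a time, obtaining a chain $G_0 \subset G_1 \subset \dots \subset G_\ell$ with $G_\ell \sim G(n, cn)$. Set $D_t := k\,|V(C_{k+1}(G_t))| - \rk C_{k+1}(G_t)$. Since Theorem~\ref{lem:k_core_density} gives $|V(C_{k+1}(G_\ell))| = \alpha n(1+o(1))$ for some $\alpha = \alpha(c) > 0$ w.h.p., it suffices to show, for any $\eps' > 0$, that $D_\ell < \eps' n$ w.h.p.

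The base case $D_0 < \eps' n$ w.h.p.\ is exactly Lemma~\ref{lem:gao}. For the per-step evolution, Lemma~\ref{lem:core_rank} implies that when the $(t+1)$-th edge causes $V_1(t)$ new vertices to enter the $(k+1)$-core with $E_1(t)$ edges among them,
\[
D_{t+1} - D_t \leq \max\bigl(0,\, E_1(t) - V_1(t)\bigr).
\]
Indeed, when $kV_1(t) - \Delta \rk C_{k+1} \leq 0$ the increment is non-positive, while in the complementary case Lemma~\ref{lem:core_rank} bounds the increment by $E_1(t) - V_1(t)$. In particular, if $V_1(t) \leq 1$ then $E_1(t) = 0$ and the right-hand side vanishes, so $D$ can only grow at ``avalanche'' steps with $V_1(t) \geq 2$. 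Telescoping,
\[
D_\ell \leq D_0 + \sum_{t=0}^{\ell-1} \bigl(E_1(t) - V_1(t)\bigr)^+.
\]

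The hard part will be to prove the total avalanche excess $S := \sum_t (E_1(t) - V_1(t))^+$ is $o(n)$ w.h.p. The plan is to aggregate all avalanche blocks across time into a single virtual subgraph of $G_\ell$ and control it through two ingredients. First, Theorem~\ref{lem:k_core_density} bounds the aggregate vertex mass $\sum_t V_1(t) = |V(C_{k+1}(G_\ell))| - |V(C_{k+1}(G_0))| = O(n)$; second, Lemma~\ref{lem:small_dense} applied to $G_\ell$ forbids induced subgraphs of size at most $\delta n$ with density exceeding $2+\eps$. These, combined with the forest-allocation argument inside each nontrivial $k$-deeply connected component of $G_\ell$ that was used in the proof of Lemma~\ref{lem:gao}, control $S$: aggregated avalanches either contribute a bounded excess inside each of the finitely many nontrivial $k$-deeply connected components of $G_\ell$, or they concentrate on a small dense piece whose existence is precluded by Lemma~\ref{lem:small_dense}. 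With $D_\ell = o(n)$ in hand, dividing by $|V(C_{k+1}(G_\ell))| = \Theta(n)$ yields the required $o(1) < \eps$ bound w.h.p.
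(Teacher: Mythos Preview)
Your coupling framework, the telescoping via $D_t = k|V(C_{k+1}(G_t))| - \rk C_{k+1}(G_t)$, the base case from Lemma~\ref{lem:gao}, and the per-step bound $D_{t+1}-D_t \le \max(0,E_1(t)-V_1(t))$ from Lemma~\ref{lem:core_rank} are all exactly what the paper does. The divergence is in how you handle the ``hard part'', and there you have a genuine gap rather than an alternative argument.

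Your plan is to aggregate all avalanche blocks into one virtual subgraph of $G_\ell$ and appeal to Lemma~\ref{lem:small_dense} together with the forest-allocation idea from Lemma~\ref{lem:gao}. Neither ingredient does what you need. The aggregate has $\sum_t V_1(t)=\Theta(n)$ vertices in general, so Lemma~\ref{lem:small_dense} (which only controls subgraphs of size $\le \delta n$) gives nothing for it. And the forest-allocation argument in Lemma~\ref{lem:gao} relied on Lemma~\ref{lem:gao_prep}, which says every subgraph has density at most $2k+\eps$ \emph{precisely at $c=c'_k$}; for $c>c'_k$ the $(k+1)$-core itself has density strictly above $2k$, so the excess-edge count inside the nontrivial $k$-deeply connected components of $G_\ell$ is $\Theta(n)$, not $o(n)$. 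Your proposed dichotomy (``bounded excess inside each component, or concentrated on a small dense piece'') therefore does not close.

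What the paper does instead is show that \emph{every individual} avalanche satisfies $V_1(t)<\delta_0 n$, so that Lemma~\ref{lem:small_dense} applies to each one separately, giving $E_1(t)-V_1(t)\le \eps_0 V_1(t)$ and hence $S\le \eps_0\sum_t V_1(t)\le \eps_0 n$. The mechanism for forcing each $V_1(t)$ to be small is the continuity of the limiting $(k+1)$-core size $\gamma_{k+1}(\cdot)$: one partitions $[c'_k,c]$ into finitely many subintervals on each of which the core grows by less than $\delta_0 n$ w.h.p.; since a single avalanche is contained in the core growth over its subinterval, it too has fewer than $\delta_0 n$ vertices. This continuity-and-partition step is the missing idea in your sketch.
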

\begin{proof}
    Fix $c'>0, \eps'>0$. By Lemma \ref{lem:gao}, w.h.p. the above statement holds for $c=c'_k$ and $\eps=\eps'/2$. Let's add the edges to $G(n,c'_kn/2)$ one by one until we get to $G(n, cn)$. Examine the value of $k|C_{k+1}|-\rk C_{k+1}$ throughout this process.

    In order to get from $G(n, c'_kn/2)$ to $G(n, cn)$, we need to draw $\left(c-\frac{c'_k}{2}\right)n$ additional edges. W.h.p. the size of $(k+1)$-core in $G(n, cn)$ lies in the interval $[0.99 c_0n, 1.01 c_0n]$ for some $c_0$. If throughout these additions the overall value of $k|C_{k+1}|-\rk C_{k+1}$ won't increase by $\frac{\eps'}{3}c_0n$, then
    \[\frac{k|C_{k+1}|-\rk C_{k+1}}{|C_{k+1}|}<\frac{\eps'}{2}+\frac{\eps'c_0n}{3|C_{k+1}|}<\eps'.\]
    Let $\eps_0=\displaystyle\frac{\eps'c_0}{6\left(c-\frac{c'_k}{2}\right)}$ and $\delta_0$ be a constant from Lemma \ref{lem:small_dense}, such that w.h.p. there are no subgraphs in $G(n,cn)$ of size at most $\delta_0n$ with $E/V\geq1+\eps_0$.

    Recall that for $t>c'_k$ there exists $\gamma_k(t)$ such that the size of $k$-core in $G(n,tn)$ converges to $\gamma_k(t)n(1+o(1))$ and this $\gamma$ is continuous and increasing. Fix $\eps_1$ such that for any $c'+k/2<t<c$, $\gamma_{k+1}(t+\eps_1)-\gamma_{k+1}(t)<\delta_0/3$ and let $\displaystyle S =\left\lceil\frac{c-\frac{c'_k}{2}}{\eps_1}\right\rceil, s=\left(c-\frac{c'_k}{2}\right)/S.$ W.h.p., for any $i<S$, the difference in sizes of $(k+1)$-cores in $G(n,(c'_k/2+is)n)$ and $G(n,(c'_k/2+(i+1)s)n)$ is less than $\delta_0n$, which means that for any $V_1$ vertices added simultaneously after drawing an additional edge, $V_1<\delta_0(n)$ and hence $E_1-V_1<V_1\cdot\eps_0$. Lemma \ref{lem:core_rank} implies that the difference $k|C_{k+1}|-\rk C_{k+1}$ can be bounded from above by $2\eps_0\cdot\left(c-\frac{c'_k}{2}\right)n=\frac{\eps'c_0n}{3}$.
\end{proof}

\begin{corollary}\label{cor:r_def}
    Fix any $c>0$. Then w.h.p.:
    \begin{enumerate}
        \item if $c<c_k$, $\rk G(n, cn)=cn(1+o(1))$;
        \item if $c>c_k$, $\rk G(n, cn)=r_k(c)n(1+o(1))$,
    \end{enumerate}
    where $\displaystyle r_k(c)=c-\frac{\lambda\pi_{k}(\lambda)}{2}+k\pi_{k+1}(\lambda), \lambda=\lambda_{k+1}(2c)$.
\end{corollary}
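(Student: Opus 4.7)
The plan is to apply the Frieze--Johansson identity of Theorem~\ref{lem:rk},
\[\rk G = |E(G)| - |E(C_{k+1}(G))| + \rk C_{k+1}(G),\]
which, together with $|E(G)|=cn$ exactly, reduces the corollary to controlling $|E(C_{k+1})|-\rk C_{k+1}$ and computing the asymptotic parameters of the $(k+1)$-core.

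For the supercritical case $c>c'_k$, I would first translate from the uniform to the binomial model via the equivalence $G(n,cn)\sim G(n,p=2c/n)$ and apply Theorem~\ref{lem:k_core_density}, obtaining $|V(C_{k+1})|=\pi_{k+1}(\lambda)n(1+o(1))$ with $\lambda=\lambda_{k+1}(2c)$. The Poisson shift identity $\EE[\xi\mathbf{1}(\xi\geq k+1)]=\lambda\pi_k(\lambda)$ for $\xi\sim\Pois(\lambda)$ yields the average degree in $C_{k+1}$ as $\lambda\pi_k(\lambda)/\pi_{k+1}(\lambda)$, hence $|E(C_{k+1})|=\tfrac{\lambda\pi_k(\lambda)}{2}n(1+o(1))$. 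The immediately preceding lemma supplies $\rk C_{k+1}=k\pi_{k+1}(\lambda)n(1+o(1))$. Substituting into the identity produces the announced $r_k(c)n(1+o(1))$.

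For the subcritical case $c<c'_k$, if $2c<\gamma_{k+1}$ the core is empty w.h.p.\ and $\rk G=|E(G)|=cn$. Otherwise, I would invoke the Nash--Williams-type formula implicit in the proof of Lemma~\ref{lem:criteria_for_independence},
\[|E|-\rk_{\Mat^k}(E)=\max_{\substack{S_1,\dots,S_r\subseteq V\\ \text{pairwise disjoint}}}\sum_{j=1}^r\bigl(|E(G[S_j])|-k(|S_j|-1)\bigr)^+.\]
Each positively-contributing $S_j$ has density at least $2k-1+1/|S_j|\geq 3$ for $k\geq 2$; Lemma~\ref{lem:small_dense} then forces $|S_j|\geq\delta(c)\,n$ for some positive $\delta(c)$, so $r\leq 1/\delta(c)=O(1)$. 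Lemma~\ref{lem:gao_prep}, inherited from $G(n,c'_kn)$ by monotone coupling, caps the density of each $S_j$ at $2k+\eps$ for any fixed $\eps>0$, bounding each summand by $(\eps/2)|S_j|+k$. Summing and letting $\eps\downarrow 0$ yields $o(n)$, whence $\rk G=cn(1+o(1))$.

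The main obstacle is the subcritical regime: turning a purely average/global density bound on the core into a quantitative, uniform near-saturation of $\rk C_{k+1}$. The Nash--Williams-style accounting handles this provided every positively-contributing part is forced to be of linear size (via Lemma~\ref{lem:small_dense}) and of near-optimal density (via Lemma~\ref{lem:gao_prep}). The supercritical case, by contrast, is a direct calculation once the translations between $G(n,cn)$ and $G(n,p=2c/n)$, and between $c$ and the Poisson rate $\lambda=\lambda_{k+1}(2c)$, are in place.
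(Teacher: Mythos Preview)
Your argument is correct and matches the paper's intended derivation: the corollary is stated without proof, but the supercritical case follows exactly as you outline from the immediately preceding lemma combined with Theorem~\ref{lem:rk} and the Pittel--Spencer--Wormald core asymptotics, and your subcritical argument is precisely the computation in the paper's proof of Lemma~\ref{lem:gao} (bounding $|E\setminus E'|$ via the $k$-deeply connected components, each of linear size by Lemma~\ref{lem:small_dense} and of density at most $2k+\eps$ by Lemma~\ref{lem:gao_prep}) carried to $c\le c'_k$ by the monotone coupling you describe. One small slip: the density lower bound for a positively-contributing $S_j$ is $2k-2(k-1)/|S_j|\ge k+1\ge 3$, not $2k-1+1/|S_j|$, but this does not affect the conclusion.
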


\begin{lemma}\label{lem:beta}
    By $\beta_k(c)$ denote $\lambda_{k+1}(c)/c$. Then $\displaystyle \frac{\partial}{\partial c}r_k(c)=1-\beta_k(2c)^2$. Additionally, $\beta_k(c)$ is the largest root of the following equation on $\beta$:
    \[\beta=\Pr[\Pois(\beta c)\geq k].\]
\end{lemma}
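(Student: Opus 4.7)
My plan is to handle the two claims in the lemma separately. The root characterization of $\beta_k(c)$ is essentially a change of variables from the Pittel et al.\ definition of $\lambda_{k+1}(c)$, while the derivative identity will require an implicit differentiation set up carefully so that a Poisson pmf identity forces the expected cancellation.

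For the root characterization, I will start from the defining equation $\lambda = c\pi_k(\lambda)$ for $\lambda = \lambda_{k+1}(c)$ (this is the form of $c = \lambda/\pi_k(\lambda)$; the subscript matches because Pittel et al.'s $\lambda_k$ is defined via $\pi_{k-1}$). Substituting $\beta = \lambda/c$ immediately gives $\beta = \pi_k(\beta c) = \Pr[\Pois(\beta c)\geq k]$, and since $\lambda\mapsto\lambda/c$ is an order-preserving bijection on positive reals, the larger positive root of the $\lambda$-equation (which defines $\lambda_{k+1}(c)$) corresponds to the largest positive root of the $\beta$-equation, which is $\beta_k(c) = \lambda_{k+1}(c)/c$ by definition.

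For the derivative, I write $\lambda = \lambda_{k+1}(2c)$ and use $\pi_k(\lambda) = \lambda/(2c)$ to reduce $r_k(c)$ to $c - \lambda^2/(4c) + k\pi_{k+1}(\lambda)$. Using the standard identity $\pi_j'(\lambda) = \Pr[\Pois(\lambda) = j-1] =: p_{j-1}$, implicit differentiation of $\lambda = 2c\pi_k(\lambda)$ yields $\lambda' = \lambda/[c(1-2cp_{k-1})]$. Differentiating $r_k$ termwise, the $\lambda'$ contributions combine into $\lambda'\cdot(2ckp_k-\lambda)/(2c)$, at which point the Poisson identity $kp_k = \lambda p_{k-1}$ (immediate from $k\lambda^k/k! = \lambda\cdot\lambda^{k-1}/(k-1)!$) rewrites the numerator as $\lambda(2cp_{k-1}-1)$ and cancels the factor $(1-2cp_{k-1})$ in the denominator of $\lambda'$. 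The remaining algebra collapses $\partial r_k/\partial c$ to $1 + \lambda^2/(4c^2) - \lambda^2/(2c^2) = 1 - \lambda^2/(4c^2)$, which equals $1-\beta_k(2c)^2$ since $\beta_k(2c) = \lambda/(2c)$.

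The only delicate point I expect is recognising the cancellation driven by $kp_k = \lambda p_{k-1}$: without it the factor $1-2cp_{k-1}$ from the implicit derivative persists in the denominator and the expression does not simplify to the claimed closed form. Once that identity is spotted, both parts of the lemma reduce to routine bookkeeping.
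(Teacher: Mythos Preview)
Your proposal is correct and follows essentially the same route as the paper: implicit differentiation of the defining relation $\lambda_{k+1}(2c)=2c\,\pi_k(\lambda_{k+1}(2c))$ followed by the Poisson pmf identity $k p_k=\lambda p_{k-1}$, and then the change of variables $\beta=\lambda/c$ for the root characterization. Your write-up is in fact a bit more streamlined than the paper's, since you substitute $\pi_k(\lambda)=\lambda/(2c)$ into $r_k$ at the outset and isolate the $\lambda'$-coefficient directly, whereas the paper carries the $\pi$-functions through a longer chain of equivalences before invoking the same identity; the underlying argument is the same.
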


Proof of Lemma \ref{lem:beta} is given in the appendix.

\begin{lemma}\label{lem:exp}
    Fix any $n, m<\binom{n}{2}$. Then, for any $k>1$, with probability $1-O(n^{-1/6})$, the following holds: $(kn-\rk G(n,m))/n<f(m/n) + 3kn^{-1/5}$ for some exponentially decreasing $f$: $f(x)<e^{-bx}$ for some $b>0$.
\end{lemma}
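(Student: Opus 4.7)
The plan is to combine an analytic bound on the limit rank $r_k(c)$ from Corollary~\ref{cor:r_def} with a quantitative concentration step for $\rk G(n,m)$, plus a separate argument in the very dense regime.

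\textbf{Analytic estimate of the limit deficit.} For $c < c_k$ the $(k+1)$-core is w.h.p.\ empty and $\rk G(n,m) = m$, so $(kn - \rk G(n,m))/n = k - c$; this is at most the constant $k$ and can be absorbed into a constant factor in $f$. For $c \geq c_k$, Corollary~\ref{cor:r_def} gives
\[k - r_k(c) = k\bigl(1 - \pi_{k+1}(\lambda)\bigr) - c\bigl(1 - \pi_k(\lambda)^2\bigr), \qquad \lambda = \lambda_{k+1}(2c).\]
Using the defining identity $\lambda = 2c\,\pi_k(\lambda)$ and the Poisson tail asymptotics $1 - \pi_j(\lambda) \sim e^{-\lambda}\lambda^{j-1}/(j-1)!$ as $\lambda \to \infty$, a direct expansion shows that the leading-order terms of the two summands cancel, leaving $k - r_k(c) = \Theta\bigl(e^{-\lambda}\lambda^{k-1}/(k-1)!\bigr)$. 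Since $\lambda$ grows at least linearly in $c$ in this range, one obtains $k - r_k(c) \leq C e^{-bc}$ for some $b>0$ and $C = C(k)$; together with the trivial small-$c$ bound this defines the desired exponentially decreasing $f$.

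\textbf{Concentration of $\rk G(n,m)$.} Since swapping one pair of edges in $G(n,m)$ changes $\rk G$ by at most $1$, the Azuma--Hoeffding inequality applied to the edge-exposure martingale gives
\[\Pr\bigl[\,|\rk G(n,m) - \EE \rk G(n,m)| \geq n^{4/5}\,\bigr] \leq 2\exp\bigl(-n^{8/5}/(2m)\bigr),\]
which is $O(n^{-1/6})$ for $m$ not too close to $\binom{n}{2}$. To relate $\EE\rk G(n,m)$ to $r_k(m/n)\,n$ I decompose via Theorem~\ref{lem:rk} as
\[kn - \rk G(n,m) = (kn - m) + \bigl(|E(C_{k+1})| - k|V(C_{k+1})|\bigr) + \bigl(k|V(C_{k+1})| - \rk C_{k+1}\bigr),\]
substitute the concentration of $|V(C_{k+1})|$ and $|E(C_{k+1})|$ from Theorem~\ref{lem:k_core_density}, and bound the last summand by $o(n^{4/5})$ using the preceding lemma (traced through with $\eps = \Theta(n^{-1/5})$).

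\textbf{Dense regime.} When $m/n$ grows with $n$ the Azuma bound weakens, so this regime needs a separate treatment. For $m \geq C_0\,n$ with $C_0 = C_0(k)$ sufficiently large, standard results on random graphs imply that $G(n,m)$ contains $k$ edge-disjoint spanning trees with probability $1 - o(n^{-1/6})$, whence $\rk G(n,m) = k(n-1)$ exactly and $(kn - \rk G(n,m))/n \leq k/n < 3k\,n^{-1/5}$ for $n$ large, so the claimed bound holds trivially.

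\textbf{Main obstacle.} The principal difficulty is quantifying the $o(1)$ terms: Corollary~\ref{cor:r_def} and the preceding lemma on $\rk C_{k+1}$ were stated with $\eps$ fixed, whereas the target $3k\,n^{-1/5}$ forces $\eps$ to shrink polynomially in $n$. I would re-trace those arguments (which rest on Lemma~\ref{lem:small_dense} and Theorem~\ref{lem:k_core_density}) with $\eps = \Theta(n^{-1/5})$ and verify that the failure probabilities remain $O(n^{-1/6})$ at this scale; stitching together the three regimes uniformly in $m$ is the delicate part.
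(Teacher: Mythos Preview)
Your approach is very different from the paper's, and it has a real error. The paper's proof is a three-line trick that avoids \emph{all} of the machinery you invoke: randomly partition the edge set of $G(n,m)$ into $k$ equal parts $G_1,\dots,G_k$, each distributed as $G(n,\lfloor m/k\rfloor)$; quote Frieze's classical bound that the number of connected components of $G(n,m')$ is at most $n\,g(m'/n)+3n^{4/5}$ with $g$ exponentially decreasing; and observe that $\rk_{\Mat^k}G\geq\sum_i\rk_{\Mat}G_i=\sum_i\bigl(n-\mathcal{C}(G_i)\bigr)$, since the union of $k$ edge-disjoint forests is independent in $\Mat^k$. This gives $(kn-\rk G)/n\leq k\,g(\lfloor m/k\rfloor/n)+3kn^{-1/5}$ directly, uniformly in $m$, with no case split and no need to touch Corollary~\ref{cor:r_def}, Lemma~\ref{lem:gao}, or the Pittel--Spencer--Wormald estimates.

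Your route, by contrast, breaks down in the ``dense regime''. You claim that for $m\geq C_0(k)\,n$ with $C_0$ a constant, $G(n,m)$ contains $k$ edge-disjoint spanning trees with probability $1-o(n^{-1/6})$. This is false: for any fixed constant $C_0$, the graph $G(n,C_0 n)$ has $\Theta(n)$ isolated vertices w.h.p.\ (the connectivity threshold is at $m\sim\tfrac12 n\log n$), so it contains no spanning tree at all and $\rk G(n,m)$ is bounded away from $k(n-1)$ by a linear amount. The whole range $C_0 n\leq m\lesssim n\log n$ is exactly where the exponential decay of the deficit must be proved, and your argument gives nothing there. The ``main obstacle'' you flag is also not a mere re-tracing exercise: Lemma~\ref{lem:gao_prep} and Theorem~\ref{lem:k_core_density} are quoted from external sources for fixed $\eps$ and fixed $c$, and pushing them to $\eps=\Theta(n^{-1/5})$ uniformly over growing $c$ would require reopening those papers. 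The edge-partition trick sidesteps all of this.
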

\begin{proof}
    Enumerate the edges of $G\sim G(n,m)$ randomly (every numbering has the same probability to be chosen): $E(G)=(e_1,\dots,e_m)$. Then construct the following edge-disjoint graphs:
    \[G_1 = \left(V, \left\{e_i\in E(G)\Big| k|(i+1)\right\}\right), \dots, G_k = \left(V, \left\{e_i\in E(G)\Big| k|(i+k)\right\}\right).\]
    Any of these $k$ graphs distributed as a uniform random graph $G(n,\lceil m/k\rceil)$ or $G(n,\lfloor m/k\rfloor)$.

    As it was shown in \cite{F1985}, there exists an exponentially decreasing function $g$ s. t. the number of components (in the traditional meaning, not $k$-deep) in $G(n,m)$ w.h.p. is at most $ng(m/n)+3n^{4/5}$. Note that $\rk_\Mat(G)=|V(G)|-\mathcal{C}(G)$, where $\mathcal{C}(G)$ is the number of components in $G$ and $\Mat$ is a graphic matroid. Hence, w.h.p. for any $i\leq k$, $\rk_\Mat(G_i)\geq n(1-g(\lfloor m/k\rfloor/n))-3n^{4/5}$, which, in turn, means, that $\rk G\geq kn(1-g(\lfloor m/k\rfloor/n))-3kn^{4/5}$.
\end{proof}

\section{The Main Results}

\subsection{Proof of Theorem \ref{th:main}}

By Lemma \ref{lem:beta}, $\frac{\partial}{\partial c}r_k(c) = 1-\beta^2_k(2c)$ for any $c>0$. Moreover, $\beta^2_k$ is uniformly continuous on $(0,2c'_k)$ and on $(2c'_k,\infty)$. We will use this information to approximate the weight of the minimum union of $k$ edge-disjoint trees.

The weight of this union is equal to $\displaystyle\sum\limits_{i=1}^{\binom{n}{2}}\mathbb{I}_i\cdot w_i$, where $w_1\leq\dots\leq w_{\binom{n}{2}}$ are weights of edges $e_1,\dots,e_{\binom{n}{2}}$ and
\[\mathbb{I}_i=\mathbb{I}\Big[\rk\{e_1,\dots,e_{i+1}\}>\rk\{e_1,\dots,e_i\}\Big].\]

Let $W$ denote $W=\displaystyle\sum\limits_{i=1}^{\binom{n}{2}}\mathbb{I}_i\cdot w_i$; also, let $W_k$ denote $W_k\displaystyle\sum\limits_{i=1}^{k}\mathbb{I}_i\cdot w_i$. For any $C,\eps>0$ we can choose small $\delta>0$ such that for any $c\in(0,2C-\delta)$ such that $2c'_k\not\in(2c, 2c+\delta)$ the following holds:
\[\sup\limits_{x\in(2c, 2c+\delta)}|\beta^2_k(2x)-\beta^2_k(2c)|<\eps.\]

\renewcommand*{\thefootnote}{\fnsymbol{footnote}}
Then, it is possible to choose $0=c_1<c_2<\dots<c_m=C$ such that $c'_k\in\{c_2,\dots, c_{m}\}$ or $c'_k>C$, and $|\beta^2_k(2c_i)- \beta^2_k(2c_{i+1})|\leq\eps$\footnote[1]{Of course, we should pay extra attention to the case of $c_i=c'_k$. In that case, we want such $c_{i+1}$ to be chosen that $\displaystyle\left|\lim\limits_{x\to c_i+0}\beta^2_k(2x)- \beta^2_k(2c_{i+1})\right|<\eps$},$\eps/2<|2c_i-2c_{i+1}|<\eps$. W.h.p., the following two conditions are satisfied for any $i$:
\[\left|\frac{\rk G(n,c_in)}{n}-r_k(c_i)\right|<\eps^2;\tag*{(i)}\label{eq:cond_i}\]
\[|nw_{c_in}-2c_i/a|<\eps.\textnormal{ }\left(w_{c_in}\textnormal{ is approximately }\frac{2c_i}{an}\right)\tag*{(ii)}\label{eq:cond_ii}\]

Therefore, we could approximate $W_{Cn}$:
\renewcommand*{\thefootnote}{\arabic{footnote}}

\begin{multline*}
    \left|W_{Cn}-\sum\limits_{i=2}^{m}\frac{2c_i}{an}\cdot\left(1-\beta^2_k(2c_i)\right)n(c_i-c_{i-1})\right|<\\
    <\left|W_{Cn}-\sum\limits_{i=2}^{m}w_{c_in}(\rk G(n,c_in)-\rk G(n,c_{i-1}n))\right|+\\
    +\left|\sum\limits_{i=2}^{m}w_{c_in}(\rk G(n,c_in)-\rk G(n,c_{i-1}n))-\sum\limits_{i=2}^{m}\frac{2c_i\left(1-\beta^2_k(2c_i)\right)(c_i-c_{i-1})}{a}\right|\overset{\mathrm{(i)}}{<}\\
    \overset{\mathrm{(i)}}{<}Cn\cdot\max\limits_{i}{|w_{c_in}-w_{c_{i-1}n}|}+
    \left|\sum\limits_{i=2}^{m}nw_{c_in}(r_k(c_i)-r_k(c_{i-1}))-\sum\limits_{i=2}^{m}\frac{2c_i\left(1-\beta^2_k(2c_i)\right)(c_i-c_{i-1})}{a}\right|+\\
    +\eps^2\sum\limits_{i=2}^{m}nw_{c_in}\overset{\mathrm{(ii)}}{<}
    \frac{2C}{a}\cdot(\max\limits_{i}{|c_i-c_{i-1}|}+2\eps)+\\
    +\left|\sum\limits_{i=2}^{m}\frac{2c_i}{a}(r(c_i)-r(c_{i-1}))-\sum\limits_{i=2}^{m}\frac{2c_i\left(1-\beta^2_k(2c_i)\right)(c_i-c_{i-1})}{a}\right|+
    \eps\sum\limits_{i=2}^{m}(r_k(c_i)-r_k(c_{i-1}))+\\
    +\eps^2\sum\limits_{i=2}^{m}(\frac{2c_i}{a}+\eps)<
    \eps\left(\frac{6C}{a}+k+m\eps^2+\frac{2Cm\eps}{a}\right)+\\
    +\frac{2C}{a}\left|\sum\limits_{i=2}^{m}\left(r_k(c_i)-r_k(c_{i-1})-\left(1-\beta^2_k(2c_i)\right)(c_i-c_{i-1})\right)\right|.
\end{multline*}
Since $r_k(c_i)-r_k(c_{i-1})$ is equal to $\int\limits_{c_{i-1}}^{c_i}1-\beta^2_k(2t)\diff t$, $\left|\sum\limits_{i=2}^{m}\left(r_k(c_i)-r_k(c_{i-1})-\left(1-\beta^2_k(2c_i)\right)(c_i-c_{i-1})\right)\right|$ can be bounded by $C\cdot\max\limits_{i, c\in[c_{i-1}, c_i]}|\beta^2(2c)-\beta^2(2c_i)|=C\eps$. From that we conclude that
\[\left|W_{Cn}-\sum\limits_{i=2}^{m}\frac{2c_i}{a}\cdot\left(1-\beta^2_k(2c_i)\right)(c_i-c_{i-1})\right|<
\eps\left(\frac{6C}{a}+k+m\eps^2+\frac{2Cm\eps}{a}+C\right);\]
since $|c_i-c_{i-1}|>\eps/4$, $m\eps<4C$, which means that for any fixed $a,k,C$, the difference between $W_{Cn}$ and our approximation tends to 0 as $\eps\to 0$.

As $\eps$ tends to 0, the sum $\sum\limits_{i=2}^{m}\frac{2c_i}{a}\cdot\left(1-\beta^2_k(2c_i)\right)(c_i-c_{i-1})$ tends to
\[\int\limits_{0}^{C}\frac{2x}{a}(1-\beta_k^2(2x))\diff x=\frac{1}{2a}\int\limits_{0}^{2C}x(1-\beta_k^2(x))\diff x,\]
Therefore, for any $a,k,C$, \[W_{Cn}\overset{\mathrm{P}}{\to}\frac{1}{2a}\int\limits_{0}^{2C}x(1-\beta_k^2(x))\diff x.\]

\renewcommand*{\thefootnote}{\fnsymbol{footnote}}
Now, if we will manage to show that $\lim\limits_{n\to\infty}$\footnote[1]{here, by $\lim\limits_{n\to\infty}\xi_n$ we denote such $\xi$ that $\xi_n\overset{\mathrm{P}}{\to}\xi$.} $\sum\limits_{i=Cn}^{\binom{n}{2}}\mathbb{I}_i\cdot w_i$ tends in probability to 0 as $C$ tends to infinity, we will prove that the weight converges to
\begin{multline*}
    \lim\limits_{n\to\infty}\sum\limits_{i=1}^{\binom{n}{2}}\mathbb{I}_i\cdot w_i=
    \lim\limits_{C\to\infty}\left(\lim\limits_{n\to\infty} \sum\limits_{i=1}^{Cn}\mathbb{I}_i\cdot w_i+\lim\limits_{n\to\infty}\sum\limits_{i=Cn}^{\binom{n}{2}}\mathbb{I}_i\cdot w_i\right)=\\
    =\lim_{C\to\infty}\frac{1}{2a}\int\limits_{0}^{2C}x(1-\beta_k^2(x))\diff x+0=
    \frac{1}{2a}\int\limits_{0}^{\infty}x(1-\beta_k^2(x))\diff x.
\end{multline*}
\renewcommand*{\thefootnote}{\arabic{footnote}}

Note that w.h.p. $G(n,2kn\log n)$ is $k$-deeply connected, as its edges can be represented as a disjoint union of edges of $k$ different $G(n,2n\log n)$, each of which is w.h.p. connected. Hence, w.h.p.
\[\sum\limits_{i=1}^{\binom{n}{2}}\mathbb{I}_i\cdot w_i=\sum\limits_{i=1}^{2kn\log n}\mathbb{I}_i\cdot w_i \textnormal{ and } \rk G(n,2kn\log n) = k(n-1).\]

For any fixed $\eps>0$, there is sufficiently large $n$ that for any $n\leq m\leq 2kn\log n$ we can easily bound

\begin{multline*}
    X := \Pr\left[w_m-2m/an^2>\eps/n\right]=
    \Pr\left[\textnormal{(number of edges with }w_i-2m/an^2\leq\eps/n)<m\right]=\\=
    \Pr\left[\Binomial(n(n-1)/2,2m/n^2+a\eps/n)<m\right]<\Pr\left[\Binomial(n^2/2,2m/n^2+a\eps/2n)<m\right].
\end{multline*}
By using the Chernoff bound we obtain
\[X\leq\exp\left(-\frac{\left(\frac{na\eps}{4}\right)^2}{2m+\frac{3na\eps}{4}}\right)\leq \exp\left(-\frac{n^2}{m}\left(\frac{a^2\eps^2}{32+12a\eps}\right)\right)\leq e^{-
\sqrt{n}}.\]

Such small probability guarantees that for any fixed $\eps> 0$, w.h.p. $w_m\leq2m/an^2+\eps/n$ for any $n\leq m\leq 2kn\log n$.

Now we are ready to bound the total weight of the heavy edges. Consider a graph with edges $e_1,\dots,e_{\binom{n}{2}}$ and their weights $w_1<\dots<w_{\binom{n}{2}}$. Note that a graph formed by edges $e_1,\dots,e_m$ is distributed as $G(n,m)$. Consider $C\in\mathbb{N}, C>1$, then w.h.p.
\begin{multline*}
    \sum\limits_{i=Cn}^{\binom{n}{2}}\mathbb{I}_i\cdot w_i=\sum\limits_{i=Cn}^{2kn\log n}\mathbb{I}_i\cdot w_i \leq \sum\limits_{i=C}^{\lceil 2k\log n\rceil} \big(\rk G(n,(i+1)n)-\rk G(n,in)\big)w_{(i+1)n} \leq \\
    \leq \sum\limits_{i=C}^{\lceil 2k\log n\rceil} \Big(\big(k(n-1)-\rk G(n,in)\big)-\big(k(n-1)-\rk G(n,(i+1)n)\big)\Big)(2(i+1)/an+\eps/n) = \\
    = \big(k(n-1)-\rk G(n,Cn)\big)\left(\frac{2(C+1)+\eps a}{an}\right)+\frac{2}{an}\sum\limits_{i=C+1}^{\lceil 2k\log n\rceil-1} \big(k(n-1)-\rk G(n,in)\big)+0 \leq \\
    \leq (nf(C)+3kn^{4/5})\left(\frac{2(C+1)+\eps a}{an}\right)+\frac{2}{an}\sum\limits_{i=C+1}^{\lceil 2k\log n\rceil-1} nf(i)+3kn^{4/5} = \\
    = \frac{f(C)(2(C+1)+\eps a)}{a}+\frac{2\sum f(i)}{a}+o(1) \leq \frac{e^{-bC}(2(C+1)+\eps a)}{a}+\frac{2e^{-b(C+1)}}{a(1-e^{-b})}+o(1)
\end{multline*}
for some $b$ from the statement of Lemma \ref{lem:exp}. It is apparent that the sum of weights of heavy vertices in the minimum union of $k$ edge-disjoint trees converges to 0 as $C$ tends to $\infty$.

\subsection{Proof of Theorem \ref{th:structure}}
Fix any $C>0$ and consider a random graph process $G_1\subset G_2\subset\dots\subset G_{Cn}$, we know that $G_m\sim G(n,m)$. By Lemma \ref{lem:small_dense}, any $k$-deeply connected component in $G_{Cn}$ has density at least $3$ and its size should be at least $\eps(C)n$ for some function $\eps$. Let us fix $\delta$ such that for any $c$ such that $(c,c+\delta)\not\ni c'_k$, $\beta^2_k(2(c+\delta))-\beta^2_k(2c)<\eps^2(C)/3$.

Split $(0, C)$ into $(0,c_1),(c_1,c_2),\dots,(c_i,c'_k-\eps),(c'_k+\eps, c_{i+1}),\dots,(c_\ell, C)$ such that the size of any interval is less than $\delta$. For any $i\in[\binom{n}{2}]$, fix $\{A^i_j\}_{1\leq j\leq p_i}$ --- the sizes of non-trivial $k$-deeply connected components in $G_{c_in}$ ($p_i$ in total). For any $i\in[\ell]$, let $A^{(i)}_1, \dots, A^{(i)}_{p_{(i)}}$ denote the sizes of non-trivial $k$-deeply connected components in $G_{c_in}$. The following statement holds.

\begin{lemma}\label{lem:a_beta_commection}
    For any $\eps_0>0$, w.h.p.
    \[(1-\eps_0)(1-(A^{(j+1)}_1/n)^2-\dots-(A^{(j+1)}_p/n)^2)\leq1-\beta_k^2(2c_j) \leq (1+\eps_0)(1-(A^{(j-1)}_1/n)^2-\dots-(A^{(j-1)}_p/n)^2).\]
\end{lemma}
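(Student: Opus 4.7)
The plan is to estimate the rank increments $\rk G_{c_j n} - \rk G_{c_{j-1} n}$ and $\rk G_{c_{j+1} n} - \rk G_{c_j n}$ in two different ways — analytically via $\beta_k$, and combinatorially via the partitions into $k$-deeply connected components — and then equate the two descriptions.

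On the analytic side, Corollary \ref{cor:r_def} together with Lemma \ref{lem:beta} yields, w.h.p.,
\[
\rk G_{c_{i+1} n} - \rk G_{c_i n} \;=\; n\int_{c_i}^{c_{i+1}} \bigl(1 - \beta_k^2(2t)\bigr)\,\diff t + o(n).
\]
Because the intervals $(c_{j-1},c_j)$ and $(c_j,c_{j+1})$ have length at most $\delta$ and do not straddle $c'_k$, our choice of $\delta$ ensures $|\beta_k^2(2t)-\beta_k^2(2c_j)|<\eps^2(C)/3$ throughout, so each rank increment equals $n(c_{i+1}-c_i)\bigl(1-\beta_k^2(2c_j)\bigr)$ up to a vanishing multiplicative error. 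On the combinatorial side, Lemma \ref{lem:connection} tells us that the $t$-th added edge raises the rank by exactly $1$ iff its endpoints lie in distinct parts of the partition $\mathcal{K}_{t-1}$ of $V$ into $k$-deeply connected components. Since adding edges can only merge, never split, $k$-deeply connected components, the partition $\mathcal{K}_t$ is monotonically coarsening in $t$. This yields the sandwich
\[
N_{+}\;\leq\;\rk G_{c_{j+1} n} - \rk G_{c_j n},\qquad \rk G_{c_j n} - \rk G_{c_{j-1} n}\;\leq\;N_{-},
\]
where $N_{-}$ (resp.\ $N_{+}$) is the number of edges added in $(c_{j-1} n,c_j n]$ (resp.\ in $(c_j n,c_{j+1} n]$) whose endpoints lie in distinct parts of $\mathcal{K}_{c_{j-1} n}$ (resp.\ $\mathcal{K}_{c_{j+1} n}$).

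It remains to estimate $N_{\pm}$. For $N_{-}$ the argument is straightforward: conditioning on $G_{c_{j-1} n}$ fixes the partition $\mathcal{K}_{c_{j-1} n}$ with profile $(A^{(j-1)}_l)$, and the $(c_j-c_{j-1})n$ subsequent edges form a uniform random subset of the remaining non-edges, so a hypergeometric Chernoff bound gives $N_{-} = (c_j-c_{j-1}) n \bigl(1 - \sum_l (A^{(j-1)}_l/n)^2\bigr)(1+o(1))$ w.h.p. For $N_{+}$ the partition $\mathcal{K}_{c_{j+1} n}$ is measurable with respect to the very edges we are counting, so we proceed indirectly: write $N_{+}$ as (edges of $G_{c_{j+1}n}$ between parts of $\mathcal{K}_{c_{j+1}n}$) minus (edges of $G_{c_j n}$ between parts of $\mathcal{K}_{c_{j+1}n}$), express the first as $c_{j+1}n$ minus the within-component edge count, and appeal to Lemma \ref{lem:gao_prep} together with Theorem \ref{th:structure} to pin down the profile $(A^{(j+1)}_l)$ (essentially a single component of size $\beta_k(2c_{j+1})n(1+o(1))$) and the internal-edge count up to $o(n)$. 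Dividing the two sandwich inequalities by $(c_{i+1}-c_i)n$ and absorbing the $o(1)$ errors into $\eps_0$ for $n$ large gives both inequalities claimed in the lemma; the subcase $c_j<c'_k$ is trivial since then $\beta_k(2c_j)=0$ and w.h.p.\ no non-trivial components exist.

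The main obstacle is the concentration of $N_{+}$: because $\mathcal{K}_{c_{j+1} n}$ is a function of the edges we are trying to count, direct conditioning on it followed by Chernoff is not available. The workaround sketched above — conditioning only on the component-size profile, which by Theorem \ref{th:structure} is essentially deterministic, and controlling within-component edges by the density bound of Lemma \ref{lem:gao_prep} — is the only non-routine part of the argument; the analytic side and the upper-sandwich step reduce to results already proved in the paper plus standard hypergeometric concentration.
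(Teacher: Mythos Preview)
Your overall strategy---estimate the rank increment analytically via Corollary~\ref{cor:r_def} and Lemma~\ref{lem:beta}, and combinatorially via the partition into $k$-deeply connected components, then compare---is exactly the paper's strategy, and your upper-sandwich step (the bound via $N_{-}$, using that partitions coarsen and that conditionally on $G_{c_{j-1}n}$ the next edges are uniform on non-edges) is correct and matches the paper's reasoning.

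The genuine gap is in your treatment of $N_{+}$. You invoke Theorem~\ref{th:structure} to ``pin down the profile $(A^{(j+1)}_l)$'' as essentially a single component of size $\beta_k(2c_{j+1})n$. But Lemma~\ref{lem:a_beta_commection} is \emph{used in the proof of} Theorem~\ref{th:structure}; indeed the entire point of the lemma is to establish the relation between $\sum_l (A^{(j)}_l/n)^2$ and $\beta_k^2(2c_j)$ from which the uniqueness and size of the giant component are then deduced. So your argument is circular. (A secondary issue: Lemma~\ref{lem:gao_prep} is stated only at $c=c'_k$, not at arbitrary $c_{j+1}>c'_k$, where subgraphs of density well above $2k$ do exist; and even granting the profile, relating the \emph{edge} count $M$ inside the component to the \emph{vertex-pair} fraction $1-\sum_l(A_l/n)^2$ is not automatic.)

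The paper avoids this circularity by a short second-moment (martingale-difference) argument. Writing $\mathcal{P}'_i=\EE[\mathbb{I}_i\mid e_1,\dots,e_{i-1}]$, one has $\mathcal{P}'_i=1-\sum_l (A_l^{i-1}/n)^2+O(1/n)$ directly from the definition---no knowledge of the profile is needed. Since $\sum_l (A_l^i)^2$ is nondecreasing in $i$, one can replace each $\mathcal{P}'_i$ on the interval by its endpoint value. The only remaining step is to show $\sum_i(\mathcal{P}'_i-\mathbb{I}_i)=o_p(n)$, which follows because the summands are bounded martingale differences with total variance $O(n)$. This handles both inequalities symmetrically and never needs to know in advance how many components there are or how big they are. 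Replacing your $N_{+}$ step with this argument would repair the proof.
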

\begin{proof}
    Let $\mathbb{I}_i$ denote
    \[\mathbb{I}_i=\mathbb{I}[\textnormal{$e_i$ connects different $k$-deeply connected components in $G_{i-1}$}],\]
    Denote $\mathcal{P}'_i=\EE[\mathbb{I}_i|e_1,\dots,e_{i-1}]$. Note that for any $i$, $\mathcal{P}'_{i+1}<\mathcal{P}'_i+O(1/n^2)$:
    \[\mathcal{P}'_{i+1}-\mathcal{P}'_i=
    \EE[\mathbb{I}_{i+1}|e_1,\dots,e_i]-\EE[\mathbb{I}_i|e_1,\dots,e_{i-1}]<\frac{X}{\binom{n}{2}-m-1}-
    \frac{X}{\binom{n}{2}-m}=\frac{X}{\Theta(n^4)},\]
    where $X=\Theta(n^2)$ is the number of possible edge placements increasing $\rk G_{i-1}$ (it is important to mention that $X$ couldn't increase after adding $e_i$ due to the matroid properties of $\Mat^{(k)}$, in particular, \ref{eq:rank}).

    $\mathcal{P'}_{c_jn+1} = \frac{\binom{n}{2}-(A_1^{(j)})^2/2-\dots-(A_p^{(j)})^2/2+O(n)}{\binom{n}{2}-O(n)}=1-(A_1^{(j)}/n)^2-\dots-(A_{p_{(j)}}^{(j)}/n)^2+O(1/n)$, therefore
    \begin{multline*}
    1-\beta_k^2(2c_j)\leq
    (1+\eps_0/3)(\rk G_{c_jn} - \rk G_{c_{j-1}n})/(c_{j+1}-c_j)n\leq\\
    \leq(1+2\eps_0/3)\frac{\mathcal{P'}_{c_{j-1}n}+\dots+\mathcal{P'}_{c_jn-1}}{(c_{j+1}-c_j)n}=
    (1+2\eps_0/3)\frac{\sum\limits_{i=c_{j-1}n}^{c_jn-1}\left(1-\sum\limits_{\alpha=1}^{p_i} (A_\alpha^i)^2 + O(1/n)\right)}{(c_{j+1}-c_j)n}\overset{\mathrm{(i)}}{\leq}\\
    \overset{\mathrm{(i)}}{\leq}(1+2\eps_0/3)\frac{(c_{j+1}-c_j)n\left(1-\sum\limits_{\alpha=1}^{p_{(j-1)}} (A_\alpha^{(j-1)})^2 + O(1/n)\right)}{(c_{j+1}-c_j)n}\leq
    (1+\eps_0)(1-(A_1^{(j-1)}/n)^2-\dots-(A_{p_{(j-1)}}^{(j-1)}/n)^2).
    \end{multline*}

    The inequality (i) holds because for any $i$,
    \[\sum\limits_{j=1}^{p_i}(A_j^i)^2\leq\sum\limits_{j=1}^{p_{i+1}}(A_j^{i+1})^2.\]

    That proves one of the two bounds. On the other hand,
    \begin{multline*}
    1-(A_1^{(j)}/n)^2-\dots-(A_{p_{(j)}}^{(j)}/n)^2=\mathcal{P}'_{c_jn+1}+O(1/n)\leq \frac{\mathcal{P}'_{c_{j-1}n}+\dots+\mathcal{P}'_{c_jn-1}}{(c_j-c_{j-1})n}+O(1/n)\overset{\mathrm{(ii)}}{\leq}\\
    \overset{\mathrm{(ii)}}{\leq} (1+\eps_0/2)\frac{\rk G_{c_jn} - \rk G_{c_{j-1}n}}{(c_j-c_{j-1})n}\leq(1+\eps_0)(1-\beta^2_k(2c_{j-1}n)),
    \end{multline*}
\end{proof}

but we should pay extra attention to the inequality marked by (ii):

\begin{lemma}
    Fix $b>a>0$. Then, the following holds:
    \[\frac{\mathcal{P}'_{bn}+\dots+\mathcal{P}'_{an-1}}{(b-a)n}-\frac{\rk G_{bn} - \rk G_{an}}{(b-a)n}\overset{\mathrm{P}}{\to}0.\]
\end{lemma}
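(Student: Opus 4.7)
The plan is to recognize $\mathbb{I}_i-\mathcal{P}'_i$ as a sequence of bounded martingale differences with respect to the natural filtration of the random graph process, and then invoke Azuma--Hoeffding concentration.

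First I would rewrite the rank increment as a sum of indicators. By the matroid property of $\Mat^k$, adding one edge either leaves the rank unchanged or raises it by exactly $1$; combined with Lemma \ref{lem:connection} (an edge is independent of the existing edge set iff it joins different $k$-deeply connected components) this gives
\[
\rk G_{bn}-\rk G_{an}=\sum_{i=an+1}^{bn}\mathbb{I}_i.
\]
Thus, up to an off-by-one boundary adjustment involving at most $O(1)$ terms (which contributes $O(1/n)$ after normalisation and is absorbed into $o(1)$), the quantity to control equals
\[
\frac{1}{(b-a)n}\sum_{i=an+1}^{bn}(\mathcal{P}'_i-\mathbb{I}_i).
\]

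Next I would set up the martingale. Let $\mathcal{F}_i=\sigma(e_1,\dots,e_i)$ and define
\[
M_t=\sum_{i=an+1}^{t}(\mathbb{I}_i-\mathcal{P}'_i),\qquad t\in\{an,\dots,bn\}.
\]
Because $\mathcal{P}'_i=\EE[\mathbb{I}_i\mid\mathcal{F}_{i-1}]$ by definition, $(M_t)$ is an $(\mathcal{F}_t)$-martingale, and the indicator bound $\mathbb{I}_i\in\{0,1\}$ together with $\mathcal{P}'_i\in[0,1]$ yields $|M_t-M_{t-1}|\leq 1$ for every $t$. Applying Azuma--Hoeffding, for any fixed $\eps>0$,
\[
\Pr\!\left[|M_{bn}|\geq\eps(b-a)n\right]\leq 2\exp\!\left(-\frac{\eps^2(b-a)n}{2}\right)\longrightarrow 0
\]
as $n\to\infty$. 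Dividing by $(b-a)n$ produces the claimed convergence in probability.

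The argument has essentially no main obstacle; it is a textbook bounded-increment martingale concentration. The only genuinely non-trivial input is the matroid fact that $\rk G_i-\rk G_{i-1}\in\{0,1\}$, which is exactly what forces $|\mathbb{I}_i-\mathcal{P}'_i|\leq 1$ and legitimises the Azuma bound; the remaining care is purely bookkeeping to match the two index ranges $\{an-1,\dots,bn\}$ for the $\mathcal{P}'$ sum and $\{an+1,\dots,bn\}$ for the $\mathbb{I}$ sum, whose discrepancy is $O(1/n)$ after normalisation and therefore harmless.
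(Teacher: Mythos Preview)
Your proof is correct and rests on the same observation as the paper's: the increments $\mathbb{I}_i-\mathcal{P}'_i$ form a bounded martingale-difference sequence with respect to $\mathcal{F}_i=\sigma(e_1,\dots,e_i)$. The paper exploits this via the orthogonality of martingale differences in $L^2$, computing $\EE\mathfrak{S}^2=O(n)$ directly (each diagonal term is at most $1$, and for $i<j$ the cross term vanishes because $\EE[\mathcal{P}'_j-\mathbb{I}_j\mid\mathcal{F}_i]=0$ by the tower property), then concluding with Chebyshev. You instead invoke Azuma--Hoeffding on the same martingale, which yields an exponential tail bound rather than a polynomial one. Both routes are standard and equally short; yours gives a quantitatively stronger estimate at no extra cost, while the paper's second-moment argument is marginally more elementary in that it avoids citing any named inequality.
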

\begin{proof}
    By definition, $\mathbb{I}_i=\rk G_i-\rk G_{i-1}$, then the statement can be equivalently expressed as
    \[\mathfrak{S}:=\sum\limits_{i=bn}^{an-1}(\mathcal{P}'_i-\mathbb{I}_i)=o_p(n).~~\left(\textnormal{i.e. } \frac{\mathfrak{S}}{n}\overset{\mathrm{P}}{\to}0\right)\]

    Note that $\displaystyle\EE[\mathcal{P}'_i] = \EE\big[\EE[\mathbb{I}_i|e_1,\dots,e_{i-1}]\big]=\EE[\mathbb{I}_i]$, which means that $\EE \mathfrak{S}=0$.

    Now, estimate $\EE \mathfrak{S}^2$:
    \begin{enumerate}
        \item $\EE (\mathcal{P}'_i-\mathbb{I}_i)^2\leq1$ since $|\mathcal{P}'_i-\mathbb{I}_i|\leq1$ a.s.
        \item Consider $(\mathcal{P}'_i-\mathbb{I}_i)(\mathcal{P}'_j-\mathbb{I}_j)$ for $i\neq j$. Note that $\mathbb{I}_i, \mathcal{P}'_i$ are $(e_1,\dots,e_{i})$-measurable. Let $i<j$, then
        \begin{multline*}
            \EE\big[(\mathcal{P}'_i-\mathbb{I}_i)(\mathcal{P}'_j-\mathbb{I}_j)\big|e_1,\dots e_i\big]=(\mathcal{P}'_i-\mathbb{I}_i)\EE\big[\mathcal{P}'_j-\mathbb{I}_j\big|e_1,\dots e_i\big] = \\
            =(\mathcal{P}'_i-\mathbb{I}_i)\Big(\EE\big[\mathbb{I}_j\big|e_1,\dots e_i\big]-\EE\big[\mathbb{I}_j\big|e_1,\dots e_i\big]\Big)=0,
        \end{multline*}
        and, consequently, $\EE[(\mathcal{P}'_i-\mathbb{I}_i)(\mathcal{P}'_j-\mathbb{I}_j)]=0$.
    \end{enumerate}
    Therefore $\EE \mathfrak{S}^2=O(n)$ and $\mathfrak{S}$ is indeed $o_p(n)$.
\end{proof}

Equivalently to the statement of Lemma \ref{lem:a_beta_commection}, for any $\eps_0>0$ w.h.p.
\[(1+\eps_0)(1-\beta_k^2(2c_{i-1}))\geq1-(A^i_1/n)^2-\dots-(A^i_p/n)^2\geq(1-\eps_0)(1-\beta_k^2(2c_{i+1})).\]

Let $t\in[(c'_k+\eps)n, Cn)$ denote the first moment such that $G_t$ contains at least two big ($\geq 2$ vertices) $k$-deeply connected components, if such moment happens. Let $t\in(c_j,c_{j+1})$ for some $j$ and let $A,B$ denote the number of vertices in these components. Then similarly, for any $\eps_0>0$, w.h.p.
\[(1+\eps_0)(1-\beta_k^2(2c_{j-1}))\geq1-(A/n)^2-(B/n)^2\geq(1-\eps_0)(1-\beta_k^2(2c_{j+1})).\]

On the other hand, $G_{t-1}$ only contains one component of size either $A$ or $B$. Let $A$ be that size. Similarly, w.h.p.
\[(1+\eps_0)(1-\beta_k^2(2c_{j-1}))\geq1-(A/n)^2\geq1-(A/n)^2-(B/n)^2>(1-\eps_0)(1-\beta_k^2(2c_{j+1})),\]
from which it is apparent that
\[(B/n)^2\leq2\eps_0+2\eps^2(C)/3.\]

Let $\eps_0=\eps^2(C)/12$, then w.h.p. $(B/n)^2\leq5\eps^2(C)/6$, but $(B/n)^2>\eps^2(C)$ since density of the subgraph induced by the vertices of this component in $G_{Cn}$ is at least $3$. Therefore, for any $C>0$ w.h.p. for any $t<Cn$, there is at most one big $k$-deeply connected component, and since
\[(1+\eps_0)(1-\beta_k^2(2c_{i-1}))\geq1-(A^i_1/n)^2\geq(1-\eps_0)(1-\beta_k^2(2c_{i+1})),\]
we obtain that  w.h.p. $A^i_1\sim\beta_k(2c_i)n$, or, equivalently, for any $c>c'_k$ there exists a single giant $k$-deeply connected component of size $\beta_k(2c)n(1+o(1))$.

\section{Conclusion and Prospects}

This paper answers a question raised by A. Frieze and T. Johansson in \cite{FRIEZE2017}: is that true that the rank of the $(k+1)$-core is equal to the $k$ times its size? The answer is yes and this allowed for the discovery of the completely new structure, the \textit{giant $k$-deeply connected component}. Recall that if $c>2c'_k$, its size in in $G(n,c/n)$ converges to $\beta_k(c)$, the largest root of the equation
\[\beta=\Pr[\Pois(\beta c)\geq k].\]
But if we consider $\gamma_k=\Pr[\Pois(\beta_k(c) c)\geq k+1]$, then it turns out to be the asymptotic size of $(k+1)$-core. This strange connection arises several questions yet to be answered:
\begin{enumerate}
    \item Is that true that the $(k+1)$-core lies entirely in the giant $k$-deeply connected component w.h.p.?
    \item Is that true that the $(k+1)$-core is $k$-deeply connected w.h.p.?
    \item Let $\tau$ denote the moment of emergence of the giant $k$-deeply connected component in the process $G_1\subsetneq\dots\subsetneq G_{\binom{n}{2}}$. Is that true that $\tau$ is w.h.p. also the first moment when the $(k+1)$ core has $v$ vertices and at least $k(v-1)$ edges?
    \item Does the Central Limit Theory hold for the size of the giant $k$-deeply connected component in uniform and/or binomial models?
\end{enumerate}

\newpage
\bibliographystyle{plainurl}
\bibliography{refs}

\begin{thebibliography}{1}

\bibitem{Bollobas2001}
Béla Bollobás.
\newblock {\em Random graphs}.
\newblock Number~73 in Cambridge studies in advanced mathematics. Cambridge University Press, 2 edition, 2001.

\bibitem{FRIEZE2017}
Alan Frieze and Tony Johansson.
\newblock On edge-disjoint spanning trees in a randomly weighted complete graph.
\newblock {\em Combinatorics, Probability and Computing}, 27(2):228–244, October 2017.
\newblock URL: \url{http://dx.doi.org/10.1017/S0963548317000426}, \href {https://doi.org/10.1017/s0963548317000426} {\path{doi:10.1017/s0963548317000426}}.

\bibitem{F1985}
A.M. Frieze.
\newblock On the value of a random minimum spanning tree problem.
\newblock {\em Discrete Applied Mathematics}, 10(1):47--56, 1985.
\newblock URL: \url{https://www.sciencedirect.com/science/article/pii/0166218X85900587}, \href {https://doi.org/10.1016/0166-218X(85)90058-7} {\path{doi:10.1016/0166-218X(85)90058-7}}.

\bibitem{Gao2017}
Pu~Gao, Xavier Pérez‐Giménez, and Cristiane~M. Sato.
\newblock Arboricity and spanning‐tree packing in random graphs.
\newblock {\em Random Structures \& Algorithms}, 52(3):495–535, December 2017.
\newblock URL: \url{http://dx.doi.org/10.1002/rsa.20743}, \href {https://doi.org/10.1002/rsa.20743} {\path{doi:10.1002/rsa.20743}}.

\bibitem{kruskal1956shortest}
Joseph~B Kruskal.
\newblock On the shortest spanning subtree of a graph and the traveling salesman problem.
\newblock {\em Proceedings of the American Mathematical society}, 7(1):48--50, 1956.

\bibitem{Pittel1996}
Boris Pittel, Joel Spencer, and Nicholas Wormald.
\newblock Sudden emergence of a giant k-core in a random graph.
\newblock {\em Journal of Combinatorial Theory, Series B}, 67(1):111–151, May 1996.
\newblock URL: \url{http://dx.doi.org/10.1006/jctb.1996.0036}, \href {https://doi.org/10.1006/jctb.1996.0036} {\path{doi:10.1006/jctb.1996.0036}}.

\bibitem{W76}
D.J.A. Welsh.
\newblock {\em Matroid Theory}.
\newblock Academic Press Inc. (London) Ltd., 1976.

\end{thebibliography}
\newpage

\appendix
\section{Proof of Lemma \ref{lem:small_dense}}
    We will prove a seemingly slightly weaker result, proving that there are not any subgraphs on $\delta n$ vertices or less with density at least $2+2\eps$ (opposed to $2+\eps$ in the original statement) for some $\delta > 0$. We also will consider binomial model $G(n,c/n)$. But upon the closer inspection it becomes clear that these statements are equivalent, because the proof works for any positive constants, for example, for $\eps/2$ and $1000c$. Using the fact that "having dense small subgraphs" is a monotone property, we can easily bound from below the amount of edges in $G(n, 1000c/n)$  as $cn$ and conclude that the same statement holds for $G(n, cn)$.

    For any $\alpha$, examine the probability
    \[\mathcal{P}_\alpha = \Pr[\textnormal{there exists a subgraph on $\alpha$ vertices with the density at least $2+2\eps$}].\]
    For any $\alpha$, $\mathcal{P}_\alpha$ is not larger then the expected number of dense subgraphs on $\alpha$ vertices, which can be bounded by
    \[\binom{n}{\alpha}\binom{\binom{\alpha}{2}}{\lceil1+\eps\rceil\alpha}p^{\lceil1+\eps\rceil\alpha}\leq\binom{n}{\alpha}\frac{\Gamma(\alpha^2/2+1)}{\Gamma((1+\eps)\alpha+2)\Gamma(\alpha^2/2-(1+\eps)\alpha)}p^{(1+\eps)\alpha}\eqdef \mathcal{P}'_\alpha,\]
    where the inequality holds for sufficiently large $\alpha$ ($\alpha> f(\eps)$ for some $f$). Note that $\mathcal{P}_\alpha=o(1)$ for any fixed $\alpha$ since it can be bounded from above by $\frac{c^{(1+\eps)\alpha}}{n^{\eps\alpha}}\cdot g(\alpha)$ for some $g$. Additionally, $\mathcal{P}'_{\lceil2/\eps\rceil}=O(n^{-2})$ for any fixed $c,\eps$.

    Now examine $D_\alpha:=\frac{\mathcal{P}'_\alpha}{\mathcal{P}'_{\alpha-1}}$. As long as $D_\alpha<1$, $\mathcal{P}'_\alpha$ is decreasing, which means that if $D_\alpha<1$ for any $\alpha\in[\lceil2/\eps\rceil, \delta n]$, then
    \[\Pr[\textnormal{exists a dense subgraph of size at most $\delta n$}]\leq\sum\limits_{i\leq\delta n}\mathcal{P}_i\leq\mathcal{P}'_1+\dots+\mathcal{P}'_{\lceil2/\eps\rceil} \cdot (\delta n - \lceil2/\eps\rceil + 1) = o(1).\]

    Now let's estimate $D_\alpha$:
    \begin{multline*}
        D_\alpha=\frac{\mathcal{P}'_\alpha}{\mathcal{P}'_{\alpha-1}}\leq\frac{n-\alpha}{1+\alpha}\cdot p^{1+\eps}\cdot\frac{\left(\frac{(\alpha+1)^2}{2}\right)^{\alpha+1/2}}{((1+\eps)\alpha)^{1+\eps}(\alpha^2/2-(1+\eps))\alpha)^{\alpha-1/2-\eps}}\leq\\
        \leq\frac{c^{1+\eps}}{n^\eps}\frac{\left(\frac{(\alpha+1)^2}{2}\right)^{1+\eps}}{(1+\alpha)((1+\eps)\alpha)^{1+\eps}}\left(\frac{\alpha^2+2\alpha+1}{\alpha^2-(2+2\eps)\alpha}\right)^{\alpha-1/2-\eps}\leq\frac{\alpha^\eps}{n^\eps}h(\eps,c),
    \end{multline*}
    where $h$ is some positive function. As it easily seen, if $\delta=\sqrt[\eps]{1/h(\eps,c)}$, then $\sum\limits_{\alpha\leq\delta n}\mathcal{P}_\alpha=o(1).$

\section{Proof of Lemma \ref{lem:core_rank}}
    If the $k+1$-core did not get new vertices upon drawing an additional edge $e$, $kV_1-\Delta\rk C_{k+1}\leq0$.

    Otherwise, there are two different cases: either $\rk G$ increased after adding $e$ or it didn't. Consider the first case.
    Note the following:
    \begin{enumerate}
        \item Since these $V_1$ vertices are now a part of the $(k+1)$-core, their degrees in the graph induced by the core are at least $k+1$: $E'+2E_1\geq(k+1)V_1$;
        \item the rank of the kernel changed by exactly $E'+E_1$ (by Lemma \ref{lem:rk}, all of these edges lie in the largest independent set of edges in the graph).
    \end{enumerate}
    Hence, $E_1-V_1\geq kV_1-\Delta\rk C_{k+1}$.

    Now, consider the second case: $e$ is dependent of $E(G)$. Lemma \ref{lem:connection} implies that both vertices of $e$ lie in the same $k$-deeply connected component. We will prove that after adding $e$, $k|C_{k+1}|-\rk C_{k+1}$ does not change.
    \begin{enumerate}
        \item If both vertices of $e$ were in $C_{k+1}$, then the fact is trivial: size of the core did not change, neither did the rank.
        \item If one of the vertices of $e$ is not in the $(k+1)$-core, consider the normal representation of the $k$-deeply connected component of $e$. Add $e$ to this representation without rearranging the layers. Note that at least one of the ends is not in the layer zero, otherwise, both ends of $e$ would have been in the core. Note, that after adding $e$, its ends with all the children (vertices that can be reached from the ends only by going to previous layers) together with the first layer form a $(k+1)$-core, therefore, are either in the core already or among these $V_1$ vertices added to the core. If no more vertices were added, then every added vertex also contains $k$ edges lying in the maximum independent set in $\Mat^k$, and $k|C_{k+1}|-\rk C_{k+1}$ won't change. If that is not the case, consider all the $V_1$ vertex added to the core and split it into 2 parts: ends of $e$ together with their children (part 1) and the rest (part 2). Now consider only vertices of part 1 connected to part 2 together with their children, all the vertices of part 2 and the core. Every vertex in what is left of part 1 has $k$ edges going down in the normal representation and additionally at least one edge going up and/or at least one edge going to part 2. All the edges not entirely in part 1 weren't erased, which means that vertices in part 2 also have degree at least $k+1$. Therefore, these vertices were already in a $(k+1)$-core, which leads to a contradiction. Then, indeed, no more vertices were added besides the ends of $e$ and their children
        \item If both ends of $e$ aren't in the core, the proof is similar to the previous case.
    \end{enumerate}
    Now, we know that if $kV_1-\Delta\rk C_{k+1}>0$, then
    \[E_1-V_1\geq kV_1-\Delta\rk C_{k+1}.\]

\section{Proof of Lemma \ref{lem:beta}}
    First of all, note that
    \[(\pi_k(x))'_x=\left(\sum\limits_{i=k}^{\infty}\frac{x^ie^{-x}}{i!}\right)'_x=\pi_{k-1}(x)-\pi_k(x).\]
    Remember that $\lambda_k(2c)=2c\cdot\pi_{k-1}(\lambda_k(2c))$. Later in the proof, we substitute $\lambda_k(2c)$ with $\lambda_k$. By differentiating the equation we get
    \[\lambda'_k=2\pi_{k-1}(\lambda_k)+2c\cdot(\pi_{k-1}(\lambda_k))'=2\pi_{k-1}(\lambda_k)+2c\cdot\lambda'_k(\pi_{k-2}(\lambda_k)-\pi_{k-1}(\lambda_k)),\]
    or, equivalently,
    \[\lambda'_k=\frac{2\pi_{k-1}(\lambda_k)}{1-2c\left(\pi_{k-2}(\lambda_k)-\pi_{k-1}(\lambda_k)\right)}.\]
    As we know from Corollary \ref{cor:r_def},
    \begin{multline*}
        \frac{\partial}{\partial c}r_k(c)=1-\left(\frac{\lambda_{k+1}\pi_k(\lambda_{k+1})}{2}\right)'+k(\pi_{k+1}(\lambda_{k+1}))'=1-\left(\frac{\lambda^2_{k+1}}{4c}\right)'+k(\pi_{k+1}(\lambda_{k+1}))'=\\
        1-\frac{\lambda_{k+1}\lambda'_{k+1}}{2c}+\frac{\lambda^2_{k+1}}{4c^2}+k\lambda'_{k+1}(\pi_{k}(\lambda_{k+1})-\pi_{k+1}(\lambda_{k+1})).
    \end{multline*}
    We want to prove that $\frac{\partial}{\partial c}r_k(c)=1-\beta_k(2c)^2$, or, equivalently, $\lambda_{k+1}^2/4c^2=\beta_k(2c)^2=1-\frac{\partial}{\partial c}r_k(c)$. Then,
    \[1-\frac{\partial}{\partial c}r_k(c)=\frac{\lambda_{k+1}\lambda'_{k+1}}{2c}-\frac{\lambda^2_{k+1}}{4c^2}-k\lambda'_{k+1}(\pi_{k}(\lambda_{k+1})-\pi_{k+1}(\lambda_{k+1})).\]

    Let's attempt to substitute $1-\frac{\partial}{\partial c}r_k(c)$ with $\beta_k(2c)^2$:
    \begin{align*}
        \frac{\lambda_{k+1}^2}{4c^2} &\vee  \frac{\lambda_{k+1}\lambda'_{k+1}}{2c}-\frac{\lambda^2_{k+1}}{4c^2}-k\lambda'_{k+1}(\pi_{k}(\lambda_{k+1})-\pi_{k+1}(\lambda_{k+1})); \\
        \frac{\lambda_{k+1}^2}{2c^2} &\vee \frac{\lambda_{k+1}\lambda'_{k+1}}{2c} -k\lambda'_{k+1}(\pi_{k}(\lambda_{k+1})-\pi_{k+1}(\lambda_{k+1})); \\
        2\pi^2_k(\lambda_{k+1}) &\vee \pi_k(\lambda_{k+1})\lambda'_{k+1} -k\lambda'_{k+1}(\pi_{k}(\lambda_{k+1})-\pi_{k+1}(\lambda_{k+1})); \\
        2\pi^2_k(\lambda_{k+1}) &\vee \lambda'_{k+1}\left(\pi_k(\lambda_{k+1}) -k(\pi_{k}(\lambda_{k+1})-\pi_{k+1}(\lambda_{k+1}))\right); \\
        \frac{2\pi^2_k(\lambda_{k+1})}{k\pi_{k+1}(\lambda_{k+1})-(k-1)\pi_k(\lambda_k)} &\vee \lambda'_{k+1}; \\
        \frac{2\pi^2_k(\lambda_{k+1})}{k\pi_{k+1}(\lambda_{k+1})-(k-1)\pi_k(\lambda_k)} &\vee \frac{2\pi_{k}(\lambda_{k+1})}{1-2c\left(\pi_{k-1}(\lambda_{k+1})-\pi_{k}(\lambda_{k+1})\right)}; \\
        \pi_{k}(\lambda_{k+1})\left(1-2c\left(\pi_{k-1}(\lambda_{k+1})-\pi_{k}(\lambda_{k+1})\right)\right) &\vee k\pi_{k+1}(\lambda_{k+1})-(k-1)\pi_k(\lambda_k); \\
        2c\pi_{k}(\lambda_{k+1})\left(\pi_{k-1}(\lambda_{k+1})-\pi_{k}(\lambda_{k+1})\right) &\vee k\pi_k(\lambda_{k+1})-k\pi_{k+1}(\lambda_k); \\
        \lambda_{k+1}\left(\pi_{k-1}(\lambda_{k+1})-\pi_{k}(\lambda_{k+1})\right) &\vee k(\pi_k(\lambda_{k+1})-\pi_{k+1}(\lambda_k)); \\
        \lambda_{k+1}\frac{\lambda_{k+1}^{k-1}e^{-\lambda_{k+1}}}{(k-1)!} &\vee k\frac{\lambda_{k+1}^{k}e^{-\lambda_{k+1}}}{k!}; \\
        \frac{\lambda_{k+1}^{k}e^{-\lambda_{k+1}}}{(k-1)!} &= \frac{\lambda_{k+1}^{k}e^{-\lambda_{k+1}}}{(k-1)!}; \\
    \end{align*}
    Indeed, $1-\frac{\partial}{\partial c}r_k(c)=\beta_k(2c)^2$.

    We know that $\lambda_k(c)=c\cdot\pi_{k-1}(\lambda_k(c))$ and that $\lambda_k(c)$ is the largest root of this equation. By rearranging the equation, we get the following:
    \[\frac{\lambda_k(c)}{c}=\pi_{k-1}\left(\frac{\lambda_k(c)}{c}\cdot c\right),\]
    from which it is obvious that $\beta_k(c)$ is in fact the largest root of the equation $\beta=\pi_k\left(\beta c\right)$.

\end{document}